\documentclass{jcmlatex}
\usepackage{subfigure}
\graphicspath{{figure/}}
\begin{document}

\def\JCMvol{xx}
\def\JCMno{x}
\def\JCMyear{200x}
\def\JCMreceived{Month xx, 200x}
\def\JCMrevised{}
\def\JCMaccepted{}
\def\JCMonline{}
\setcounter{page}{1}



\def\cgeq{\raisebox{-1mm}{$\;\stackrel{>}{\sim}\;$}}
\def\cequiv{\raisebox{-1mm}{$\;\stackrel{=}{\sim}\;$}}
\def\cleq{\raisebox{-1mm}{$\;\stackrel{<}{\sim}\;$}}
\def\T{\mathop{\cal T}}
\def\esssup{\mathop{\rm esssup}}
\def\qed{\hfill$\fbox{}$}
\catcode`@=11
\newskip\plaincentering \plaincentering=0pt plus 1000pt minus 1000pt
\def\@plainlign{\tabskip=0pt\everycr={}} 
\def\eqalignno#1{\displ@y \tabskip=\plaincentering
  \halign to\displaywidth{\hfil$\@plainlign\displaystyle{##}$\tabskip=0pt
    &$\@plainlign\displaystyle{{}}##$\hfil\tabskip=\plaincentering
    &\llap{$\hbox{\rm\@plainlign##}$}\tabskip=0pt\crcr
    #1\crcr}}



\markboth{W.H.  DENG,  AND M.H. CHEN}
 {Efficient Numerical Algorithms for three-dimensional fractional partial differential equations}

\title{EFFICIENT NUMERICAL ALGORITHMS FOR THREE-DIMENSIONAL FRACTIONAL PARTIAL DIFFERENTIAL EQUATIONS}

\author{Weihua Deng \and Minghua Chen
               \thanks{School of Mathematics and Statistics, Lanzhou University, Lanzhou 730000, China\\
Email: dengwh@lzu.edu.cn, chenmh09@163.com}}

\maketitle

\begin{abstract}
This paper detailedly discusses the locally one-dimensional
numerical methods for efficiently solving the three-dimensional
fractional partial differential equations, including fractional advection diffusion equation and Riesz fractional diffusion equation. The second order finite difference scheme is
used to discretize the space fractional derivative and the
Crank-Nicolson procedure to the time derivative. We theoretically
prove and numerically verify that the presented numerical methods
are unconditionally stable and second order convergent in both space
and time directions. In particular, for the Riesz fractional diffusion equation, the idea of reducing the splitting error is used to further improve the algorithm, and the unconditional stability and convergency are also strictly proved and numerically verified for the improved scheme.

\end{abstract}

\begin{classification}
26A33, 65L20.
\end{classification}

\begin{keywords}
Fractional partial differential equations, Numerical stability,
 Locally one dimensional method,
 Crank-Nicolson procedure,  Alternating direction implicit method.
\end{keywords}

\section{Introduction}
The history of fractional calculus can goes back to more than three
hundred years ago \cite{Kenneth:93}, almost the same as classical
calculus. Nowadays it has become more and more popular among various
scientific fields, covering anomalous diffusion, materials and
mechanical, signal processing and systems identification, control
and robotics, rheology, fluid flow, signal processing, and
electrical networks et al \cite{Metzler:00}. Meanwhile, the diverse
fractional partial differential equations (fractional PDEs), as
models, appear naturally in the corresponding field.

There are already some important progress for numerically solving
the fractional PDEs. The methods used for classical PDEs are well
extended to fractional PDEs, for example, the finite difference
method
\cite{Liu:09,Sousa:12,Charles:07,Tian:12},
finite element method \cite{Deng:08,Ervin:05}, and spectral method
\cite{Xu:09}. However, almost all of them concentrate on one or two
dimensional problems.  There are already some good developments for
realizing the operator splitting (locally one dimension) to solve
the classical PDEs. This paper focuses on extending the alternating
direction implicit (ADI) methods to the three-dimensional fractional
PDEs, and improving their efficiency.

The Peaceman and Rachford alternating direction implicit method
(PR-ADI) \cite{Peaceman:55} works well for two-dimensional problems. But it can not be
extended to higher dimensional problems. Douglas type alternating
direction implicit methods (D-ADI)
\cite{Douglas:55,Douglas:64,Douglas:01} are valid for any
dimensional equations. And PR-ADI and D-ADI are equivalent in two
dimensional problems. In this paper, we consider the following three-dimensional
 fractional advection diffusion equation,
\begin{equation}\label{1.1}
\begin{split}
  \frac{\partial u(x,y,z,t) }{\partial t}=&d_1^x\, _{x_L}\!D_x^{\alpha}u(x,y,z,t)+d_2^x\,  _{x}D_{x_R}^{\alpha}u(x,y,z,t) \\
  &+d_1^y \,_{y_L}\!D_y^{\beta}u(x,y,z,t)+d_2^y \,  _{y}D_{y_R}^{\beta}u(x,y,z,t)\\
  &+d_1^z\,   _{z_L}\!D_{z}^{\gamma}u(x,y,z,t)+d_2^z\,  _{z}D_{z_R}^{\gamma}u(x,y,z,t)\\
  &+\kappa_x \frac{\partial u(x,y,z,t)}{ \partial x}  +\kappa_y\frac{\partial u(x,y,z,t)}{ \partial y}+\kappa_z \frac{\partial u(x,y,z,t)}{ \partial z}\\
  &+f(x,y,z,t),
\end{split}
\end{equation}
and the Riesz fractional diffusion equation
\begin{equation*}
\begin{array}{ll}
 \displaystyle \frac{\partial u(x,y,z,t) }{\partial t}=&d_1^x\big(\, _{x_L}\!D_x^{\alpha}u(x,y,z,t)+\, _{x}D_{x_R}^{\alpha}u(x,y,z,t)\big) \\
  & \displaystyle +d_1^y \big( _{y_L}\!D_y^{\beta}u(x,y,z,t)+  \,_{y}D_{y_R}^{\beta}u(x,y,z,t) \big)\\
  & \displaystyle +d_1^z\big(  _{z_L}\!D_{z}^{\gamma}u(x,y,z,t)+  \,_{z}D_{z_R}^{\gamma}u(x,y,z,t) \big)\\
    & \displaystyle +f(x,y,z,t),
  \end{array} \eqno{(1.1^\prime)}
\end{equation*}
both with the initial condition
\begin{equation}\label{1.2}
u(x,y,z,0)=u_0(x,y,z) ~~~ {\rm for}~~~ (x,y,z) \in \Omega,
\end{equation}
and the Dirichlet boundary condition
\begin{equation}\label{1.3}
  u(x,y,z,t)=0 ~~~ {\rm for}~~~ (x,y,z,t) \in \partial \Omega \times (0,T],
\end{equation}
where $\Omega=(x_L,x_R) \times (y_L,y_R)\times (z_L,z_R)\subset
\mathbb{R}^3$, $0< t \leq T$, and the
fractional orders $1<\alpha,\beta, \gamma <2$; and
$f(x,y,z,t)$ is a forcing function; and all the coefficients
are non-negative constants. The fractional derivatives used in
(\ref{1.1}) and $(1.1^\prime)$ are defined as, for $1<\mu <2$,
\begin{equation}\label{1.4}
  _{x_L}D_x^\mu u(x)=\frac{1}{\Gamma(2-\mu)}\frac{\partial^2}{\partial x^2}
 \int_{x_L}^x{\left(x-\xi\right)^{1-\mu}}{u(\xi)}d\xi
\end{equation}
and
\begin{equation}\label{1.5}
 _{x}D_{x_R}^\mu u(x)=\frac{1}{\Gamma (2-\mu)}\frac{\partial^2}{\partial x^2}
  \int_x^{x_R}{(\xi-x)^{1-\mu}}{u(\xi)}d\xi.
\end{equation}
From the viewpoint of conversation law, the advection term in the advection diffusion equation should be first order classical derivative, and the fractional derivative corresponding to the diffusion term should be Riemann-Liouville one.

For the two-dimensional case of (\ref{1.1})-(\ref{1.3}), PR-ADI and
D-ADI are discussed and we show that they are equivalent for
two-dimensional equations. We use D-ADI for the three-dimensional
 (\ref{1.1})-(\ref{1.3}). The second order finite difference
scheme is used to discretize the space fractional derivative and the
Crank-Nicolson procedure to the time direction. We theoretically
prove and numerically confirm that the given numerical schemes are
unconditionally stable and second order convergent in both space and
time directions. In general, the ADI methods introduce new error
term, called the splitting error, comparing with the original
discretizations. Usually the splitting error term does not affect
the convergent order, but most of the time it lowers the accuracy
seriously. For $(1.1^\prime)$, we use the idea in \cite{Douglas:01} to reduce the
splitting error from $\mathcal{O}(\tau^2)$ to
$\mathcal{O}(\tau^3)$ at reasonable computational cost and
then recover the accuracy of the original discretization, the
improved ADI will be called D-ADI-II. The fractional step (FS)
method is also simply discussed to show that, after a minor
modification to reduce the splitting error from $\mathcal{O}(\tau)$ to $\mathcal{O}(\tau^3)$, it is equivalent to D-ADI-II.

The outline of this paper is as follows. In Section 2, we introduce
the second order finite difference schemes for the left and right
Riemann-Liouville fractional derivatives (\ref{1.4}) and
(\ref{1.5}), and the full discretization schemes of the
one-dimensional and two-dimensional case of (\ref{1.1})-(\ref{1.3}) and
(\ref{1.1})-(\ref{1.3}) itself are detailedly provided. Section 3
discusses improving the accuracy and efficiency of ADI, presents
D-ADI-II for $(1.1^\prime)$, and shows that, after a minor modification of the FS
method, it is equivalent to D-ADI-II. We do the convergence and stability
analysis for the schemes used in this paper in Section 4. The
numerical results are given in Section 5 and we conclude this paper
with some discussions in the last section.



\section{Discretization Schemes}\label{sec:1}
We use fourth subsections to derive the full discretization of
(\ref{1.1}), and the corresponding schemes of ($1.1^\prime$) can be obtained by letting $d_1^x=d_2^x$, $d_1^y=d_2^y$, $d_1^z=d_2^z$, and $\kappa_x=\kappa_y=\kappa_z=0$. The first subsection introduces the second
order finite difference schemes for the left and right
Riemann-Liouville fractional derivatives (\ref{1.4}) and (\ref{1.5})
in a finite interval given in \cite{Chen:12} based on the idea of
\cite{Sousa:12}. The second to fourth subsection present the D-ADI
schemes for the one-dimensional and  two-dimensional case of (\ref{1.1})-(\ref{1.3}) and
(\ref{1.1})-(\ref{1.3}) itself, respectively.


\subsection{Discretizations for the left and right Riemann-Liouville fractional derivatives}
Let the mesh points $x_i=x_L+i\Delta x$,  $0 \leq i \leq {N_x}$, $y_j=y_L+j\Delta y$, $0\leq j \leq {N_y}$, $z_m=z_L+m\Delta z$, $0\leq m \leq {N_z}$
and $t_n=n\tau$, $0\leq n \leq {N_t}$, where
 $\Delta x=(x_R-x_L)/{N_x}$, $\Delta y=(y_R-y_L)/{N_y}$, $\Delta z=(z_R-z_L)/{N_z}$, $\tau=T/{N_t}$,
 i.e., $\Delta x$, $\Delta y$ and $\Delta z$ are the uniform space stepsizes in the corresponding directions, $\tau$ the time stepsize.
 For $\mu\in (1,2)$,  the left and right  Riemann-Liouville space fractional
derivatives (\ref{1.3}) and (\ref{1.4}) have the second-order approximation operators
$\delta'_{\mu,_+x}{u_{i,j,m}^n}$ and $\delta'_{\mu,_-x}{u_{i,j,m}^n}$, respectively, given in a finite
domain \cite{Chen:12,Sousa:12}, where ${u_{i,j,m}^n}$ denotes the
approximated value of $u(x_i,y_j,z_m,t_n)$.

The approximation operator of (\ref{1.4}) is defined by \cite{Chen:12,Sousa:12}
\begin{equation}\label{2.1}
\begin{split}
  \delta'_{\mu,_+x}{u_{i,j,m}^n}:=\frac{1}{\Gamma(4-\mu)(\Delta x)^{\mu}}\sum_{l=0}^{i+1} g_l^{\mu}u_{i-l+1,j,m}^n,
  \end{split}
\end{equation}
and there exists
\begin{equation}\label{2.2}
  _{x_L}\!D_x^{\mu}u(x,y_j,z_m,t_n)\big|_{x=x_i}= \delta'_{\mu,_+x}u(x_i,y_j,z_m,t_n)+\mathcal{O}(\Delta x)^2,
\end{equation}
where
\begin{equation*}
 \delta'_{\mu,_+x}u(x_i,y_j,z_m,t_n)=\frac{1}{\Gamma(4-\mu)(\Delta x)^{\mu}}\sum_{l=0}^{i+1} g_l^{\mu}u(x_{i-l+1},y_j,z_m,t_n),
\end{equation*}
and
\begin{equation}\label{2.3}
g_l^{\mu}=\left\{ \begin{array}
 {l@{\quad } l}
1,&l =0,\\
-4+2^{3-\mu},&l =1,\\
6-2^{5-\mu}+3^{3-\mu},&l =2,\\
(l+1)^{3-\mu}-4l^{3-\mu}+6(l-1)^{3-\mu}\\
~~~~~~~~~~~-4(l-2)^{3-\mu}+(l-3)^{3-\mu},&l \geq 3.\\
 \end{array}
 \right.
\end{equation}
Analogously, the approximation operator of (\ref{1.5}) is  described as \cite{Chen:12}
 \begin{equation}\label{2.4}
  \delta'_{\mu,_-x}{u_{i,j,m}^n}:
                              =\frac{1}{\Gamma(4-\mu)(\Delta x)^{\mu}}\!\!\!\sum_{l=0}^{N_x-i+1}\!\!\!g_l^{\mu}u_{i+l-1,j,m}^n,
\end{equation}
where $g_l^{\mu}$ is defined by (\ref{2.3}), and it holds that
\begin{equation}\label{2.5}
  _{x}D_{x_R}^{\mu}u(x,y_j,z_m,t_n)\big|_{x=x_i}= \delta'_{\mu,_-x}u(x_i,y_j,z_m,t_n)+\mathcal{O}(\Delta x)^2,
\end{equation}
where
\begin{equation*}
 \delta'_{\mu,_-x}u(x_i,y_j,z_m,t_n)=\frac{1}{\Gamma(4-\mu)(\Delta x)^{\mu}}\!\!\!\sum_{l=0}^{N_x-i+1}\!\!\! g_l^{\mu}u(x_{i+l-1},y_j,z_m,t_n).
\end{equation*}

In the   following, we introduce and   list  some discrete operators which
work for the functions of three variables $x$, $y$, and $z$:
\begin{equation}\label{2.6}
\begin{split}
& D'_{\alpha,x}u_{i,j,m}^n\!=\!\frac{u_{i+1,j,m}^{n}-u_{i-1,j,m}^n}{2\Delta x};
  ~~~~\,\qquad\qquad\quad \,D''_{\alpha,x}u_{i,j,m}^n\!=\!\kappa_xD'_{\alpha,x}u_{i,j,m}^n; \\
&
\delta'_{\alpha,_+x}{u_{i,j,m}^n}\!=\!\frac{1}{\Gamma(4-\alpha)(\Delta x)^{\alpha}}\sum_{l=0}^{i+1} g_l^{\alpha}u_{i-l+1,j,m}^n;
 \quad \delta''_{\alpha,_+x}{u_{i,j,m}^n}\!=d_1^x\delta'_{\alpha,_+x}{u_{i,j,m}^n};\\
&
\delta'_{\alpha,_-x}{u_{i,j,m}^n}\!=\!\frac{1}{\Gamma(4-\alpha)(\Delta x)^{\alpha}}
 \!\!\! \!\!\!\sum_{l=0}^{N_x-i+1}\!\!\!g_l^{\alpha}u_{i+l-1,j,m}^n; \quad
\delta''_{\alpha,_-x}{u_{i,j,m}^n}\!=d_2^x\delta'_{\alpha,_-x}{u_{i,j,m}^n}.
\end{split}
\end{equation}
The discrete operators related to the variable $x$ or $y$ in the
above also work for functions of two variables $x$ and $y$, e.g.,
$D'_{\alpha,x}u_{i,j}^n=\frac{u_{i+1,j}^{n}-u_{i-1,j}^n}{2\Delta x}$; \,\,
$\delta'_{\alpha,_+x}{u_{i,j}^n}=\frac{1}{\Gamma(4-\alpha)\Delta  x^{\alpha}}  \sum_{l=0}^{i+1} g_l^{\alpha}u_{i-l+1,j}^n$.

Similarly, it is easy to get the one-dimensional and two-dimensioanl case of (\ref{2.1})-(\ref{2.6}).

\begin{remark}[\cite{Chen:12}] Denoting
$\tilde{U}^n=[u_1^n,u_2^n,\cdots,u_{N_x-1}^n]^{\rm T}$, and
 rewriting (\ref{2.1}) and (\ref{2.4}) as matrix forms
$\delta'_{\alpha,_+x} \tilde{U}^n=\tilde{A}\tilde{U}^n+b_1$ and
$\delta'_{\alpha,_-x} \tilde{U}^n=\tilde{B}\tilde{U}^n+b_2$,
respectively, then there exists $\tilde{A}=\tilde{B}^{\rm T}$.
\end{remark}

\subsection{Numerical scheme for 1D}
Consider the full discretization scheme to the one-dimensional
case of (\ref{1.1}), namely,
\begin{equation}\label{2.7}
\begin{split}
\frac{\partial u(x,t)}{\partial t}=&d_1^x\, _{x_L}\!D_x^{\alpha}u(x,t)+d_2^x\,  _{x}D_{x_R}^{\alpha}u(x,t)+\kappa_x \frac{\partial u(x,t)}{ \partial x} +f(x,t).
\end{split}
\end{equation}
In the time direction, we use the Crank-Nicolson scheme.
 The central difference formula, left fractional approximation operator
(\ref{2.2}), and right fractional approximation operator
(\ref{2.5}) are respectively used to discretize the classical second
order space derivative, left Riemann-Liouville fractional
derivative, and right Riemann-Liouville fractional derivative.
Taking the uniform time step $\tau$ and space step $\Delta x$,
and taking $u_{i}^n$ as the approximated value of $u(x_i,t_n)$ and
$f_i^{n+1/2}=f(x_i,t_{n+1/2})$, where $t_{n+1/2}=(t_n+t_{n+1})/2$,
using the one-dimensioanl case of  (\ref{2.1})-(\ref{2.6}), we can write (\ref{2.7}) as
\begin{equation}\label{2.8}
\begin{split}
  \frac{u(x_i,t_{n+1})-u(x_i,t_{n})}{\tau}=&\frac{1}{2} \Big[d_1^x \delta'_{\alpha,_+x}u(x_i,t_{n+1}) +d_1^x\delta'_{\alpha,_+x}u(x_i,t_{n})
 +d_2^x\delta'_{\alpha,_-x}u(x_i,t_{n+1}) \\
 &+d_2^x\delta'_{\alpha,_-x}u(x_i,t_{n})
     +\kappa_x  D'_{\alpha,x}u(x_i,t_{n+1})+\kappa_xD'_{\alpha,x}u(x_i,t_{n})   \Big]\\
     & +f(x_i,t_{n+1/2})+\mathcal{O}(\tau^2+(\Delta x)^2),
\end{split}
\end{equation}
where
\begin{equation*}
  D'_{\alpha,x}u(x_i,t_{n})=\frac{u(x_{i+1},t_{n})-u(x_{i-1},t_{n})}{2\Delta x}.
\end{equation*}
Multiplying (\ref{2.8}) by $\tau$, we have the following equation
\begin{equation}\label{2.9}
\begin{split}
 & \left[ 1- \frac{\tau }{2}\left(   d_1^x\delta'_{\alpha,_+x}  +  d_2^x\delta'_{\alpha,_-x}
                     + \kappa_xD'_{\alpha,x}\right)   \right]  u(x_i,t_{n+1})\\
  & \quad=\left[ 1+ \frac{\tau }{2}\left(   d_1^x\delta'_{\alpha,_+x}  +  d_2^x\delta'_{\alpha,_-x}
                     + \kappa_xD'_{\alpha,x}\right)\right]u(x_i,t_{n})
         +\tau f(x_i,t_{n+1/2})+ R_i^{n+1},
\end{split}
\end{equation}
where
 \begin{equation}\label{2.10}
 |R_i^{n+1}|\leq \widetilde{c} \tau(\tau^2+(\Delta x)^2).
 \end{equation}
Therefore, the full discretization of
 (\ref{2.7}) has the following form
 \begin{equation}\label{2.11}
\begin{split}
 & \left[ 1-\frac{\tau }{2}\left(   d_1^x\delta'_{\alpha,_+x}  +  d_2^x\delta'_{\alpha,_-x}
                     + \kappa_xD'_{\alpha,x}\right)    \right]   u_i^{n+1}\\
 & \quad=\left[ 1+\frac{\tau }{2}\left(   d_1^x\delta'_{\alpha,_+x}  +  d_2^x\delta'_{\alpha,_-x}
                     + \kappa_xD'_{\alpha,x}\right)\right]u_i^n   +\tau f_i^{n+1/2}.
\end{split}
\end{equation}
We can write (\ref{2.11}) as
 \begin{equation}\label{2.12}
\begin{split}
 &  u_i^{n+1} \!-\! \frac{\tau }{2}\left[\frac{ d_1^x}{\Gamma(4-\alpha)(\Delta x)^{\alpha}}\sum_{l=0}^{i+1} g_l^{\alpha}u_{i-l+1}^{n+1}
   +\frac{ d_2^x}{\Gamma(4-\alpha)(\Delta x)^{\alpha}}
 \!\!\! \!\!\!\sum_{l=0}^{N_x-i+1}\!\!\!g_l^{\alpha}u_{i+l-1}^{n+1}
                     +\frac{\kappa_x}{2\Delta x}\left(u_{i+1}^{n+1}- u_{i-1}^{n+1}\right) \right]     \\
 & =  u_i^{n}\!+\!\frac{\tau }{2}\left[ \frac{ d_1^x}{\Gamma(4-\alpha)(\Delta x)^{\alpha}}\sum_{l=0}^{i+1} g_l^{\alpha}u_{i-l+1}^{n}
   + \frac{d_2^x}{\Gamma(4-\alpha)(\Delta x)^{\alpha}}
 \!\!\! \!\!\!\sum_{l=0}^{N_x-i+1}\!\!\!g_l^{\alpha}u_{i+l-1}^{n}
                     +\frac{ \kappa_x}{2\Delta x}\left(u_{i+1}^{n}- u_{i-1}^{n}\right)\right] \\
  &\quad +\tau f_i^{n+1/2}.
\end{split}
\end{equation}

For the convenience of implementation, we use the matrix form of the grid functions
 \begin{equation*}
 U^{n}=[u_1^n,u_2^n,\ldots,u_{N_x-1}^n]^{\rm T}, ~~F^{n+1/2}=[f_1^{n+1/2},f_2^{n+1/2},\ldots,f_{N_x-1}^{n+1/2}]^{\rm T},
  \end{equation*}
therefore, the finite difference scheme (\ref{2.12}) can be rewritten as
 \begin{equation}\label{2.13}
\begin{split}
 &  \left[I- \frac{\tau }{2}\left(\frac{d_1^x}{\Gamma(4-\alpha)(\Delta x)^{\alpha}}A_{\alpha}
   +\frac{ d_2^x}{\Gamma(4-\alpha)(\Delta x)^{\alpha}}A_{\alpha}^T
                     +\frac{\kappa_x}{2\Delta x}B\right) \right] U^{n+1}    \\
 & \quad= \left[I+\frac{\tau }{2} \left(\frac{ d_1^x}{\Gamma(4-\alpha)(\Delta x)^{\alpha}}A_{\alpha}
   +\frac{ d_2^x}{\Gamma(4-\alpha)(\Delta x)^{\alpha}}A_{\alpha}^T
                     +\frac{ \kappa_x}{2\Delta x}B\right) \right] U^{n}+\tau F^{n+1/2},
\end{split}
\end{equation}
where
\begin{equation}\label{2.14}
A_{\alpha}=\left [ \begin{matrix}
g_1^{\alpha}             &g_0^{\alpha}         &0                         &      \cdots              &      0                  &  0  \\
g_2^{\alpha}             &g_1^{\alpha}         &g_0^{\alpha}              &       0                  &     \cdots              &  0 \\
g_3^{\alpha}             &g_2^{\alpha}         &g_1^{\alpha}              &     g_0^{\alpha}         &     \ddots              & \vdots  \\
\vdots                   &\ddots               &       \ddots             &        \ddots            &      \ddots             & \dots    \\
g_{N_x-2}^{\alpha}       &\ddots               &       \ddots             &        \ddots            &   g_1^{\alpha}          & g_0^{\alpha} \\
g_{N_x-1}^{\alpha}       &g_{N_x-2}^{\alpha}   &    g_{N_x-3}^{\alpha}    &         \cdots           &g_2^{\alpha}             & g_1^{\alpha}
 \end{matrix}
 \right ],  \quad \mbox{and} \quad
 B=\left [ \begin{matrix}
0            &1        &0                         &      \cdots              &      0                  &  0  \\
-1             &0        &1             &       0                  &     \cdots              &  0 \\
0             &-1        &0             &     1         &     \ddots              & \vdots  \\
\vdots                   &\ddots               &       \ddots             &        \ddots            &      \ddots             & \dots    \\
0       &\ddots               &       \ddots             &        \ddots            &  0         & 1 \\
0       &0   &    0   &         \cdots           &-1            & 0
 \end{matrix}
 \right ].
\end{equation}

\subsection{PR-ADI and D-ADI schemes for 2D}

We now examine the full discretization scheme to the two-dimensional
case of (\ref{1.1}), i.e.,

\begin{equation}\label{2.15}
\begin{split}
  \frac{\partial u(x,y,t) }{\partial t}=&d_1^x\, _{x_L}\!D_x^{\alpha}u(x,y,t)+d_2^x\,  _{x}D_{x_R}^{\alpha}u(x,y,t)
  +d_1^y \,_{y_L}\!D_y^{\beta}u(x,y,t)+d_2^y \,  _{y}D_{y_R}^{\beta}u(x,y,t)\\
  &+\kappa_x \frac{\partial u(x,y,t)}{ \partial x}  +\kappa_y\frac{\partial u(x,y,t)}{ \partial y}+f(x,y,t).
\end{split}
\end{equation}

Analogously we still use the Crank-Nicolson scheme to do the discretization in time direction.
Taking $u_{i,j}^n$ as the approximated value of $u(x_i,y_j,t_n)$,
using the two-dimensioanl case of  (\ref{2.1})-(\ref{2.6}), we can write (\ref{2.15}) as
\begin{equation}\label{2.16}
\begin{split}
 &\left [1-\frac{\tau}{2}\left( \delta''_{\alpha,_+x}+  \delta''_{\alpha,_-x} + D''_{\alpha,x}  \right )
 -\frac{\tau}{2}\left( \delta''_{\beta,_+y} +  \delta_{\beta,_-y}^{''} + D''_{\beta,y}   \right)\right ]u(x_i,y_j,t_{n+1}) \\
 &\quad=\left [1+\frac{\tau}{2}\left( \delta''_{\alpha,_+x} +  \delta''_{\alpha,_-x} + D''_{\alpha,x}  \right )
 +\frac{\tau}{2}\left( \delta''_{\beta,_+y} +  \delta''_{\beta,_-y} + D''_{\beta,y} \right )   \right ]u(x_i,y_j,t_{n})\\
 &\qquad+ \tau f(x_i,y_j,t_{n+1/2})
 + R_{i,j}^{n+1},
\end{split}
\end{equation}
where
 \begin{equation}\label{2.17}
 |R_{i,j}^{n+1}|\leq \widetilde{c} \tau(\tau^2+(\Delta x)^2+(\Delta y)^2).
 \end{equation}
Using the notations of (\ref{2.6}),  we further define
\begin{equation*}
\begin{split}
 &\delta_{\alpha,x}:= \delta''_{\alpha,_+x} +  \delta''_{\alpha,_-x} + D''_{\alpha,x};\\
 &\delta_{\beta,y}:=\delta''_{\beta,_+y} +  \delta''_{\beta,_-y} +D''_{\beta,y},
\end{split}
\end{equation*}
thus, the resulting discretization of (\ref{2.15}) can be written as a Crank-Nicolson type finite difference equation
\begin{equation}\label{2.18}
\begin{split}
  \frac{u_{i,j}^{n+1}-u_{i,j}^n}{\tau}
  =\frac{\delta_{\alpha,x}u_{i,j}^{n+1}+\delta_{\alpha,x}u_{i,j}^{n}+\delta_{\beta,y}u_{i,j}^{n+1}+\delta_{\beta,y}u_{i,j}^{n}}{2}+ f_{i,j}^{n+1/2},
\end{split}
\end{equation}
i.e.,
\begin{equation}\label{2.19}
\begin{split}
 &\left [1-\frac{\tau}{2}\left( \delta''_{\alpha,_+x}+  \delta''_{\alpha,_-x} + D''_{\alpha,x}  \right )
 -\frac{\tau}{2}\left( \delta''_{\beta,_+y} +  \delta_{\beta,_-y}^{''} + D''_{\beta,y}   \right)\right ]u_{i,j}^{n+1} \\
 &\quad=\left [1+\frac{\tau}{2}\left( \delta''_{\alpha,_+x} +  \delta''_{\alpha,_-x} + D''_{\alpha,x}  \right )
 +\frac{\tau}{2}\left( \delta''_{\beta,_+y} +  \delta''_{\beta,_-y} + D''_{\beta,y} \right )   \right ]u_{i,j}^{n}+ \tau f_{i,j}^{n+1/2},
\end{split}
\end{equation}
or
\begin{equation}\label{2.20}
 \left (1-\frac{\tau}{2}\delta_{\alpha,x}-\frac{\tau}{2}\delta_{\beta,y}\right )u_{i,j}^{n+1}
 =\left (1+\frac{\tau}{2}\delta_{\alpha,x}+\frac{\tau}{2}\delta_{\beta,y}\right )u_{i,j}^{n}+ \tau f_{i,j}^{n+1/2}.
\end{equation}
The perturbation equation of (\ref{2.20}) is of the form
\begin{equation}\label{2.21}
\begin{split}
 \left (1-\frac{\tau}{2}\delta_{\alpha,x} \right )\left(1-\frac{\tau}{2}\delta_{\beta,y}\right )u_{i,j}^{n+1}
 =\left (1+\frac{\tau}{2}\delta_{\alpha,x}\right)\left(1+\frac{\tau}{2}\delta_{\beta,y}\right )u_{i,j}^{n}+ \tau f_{i,j}^{n+1/2}.
\end{split}
\end{equation}
Comparing (\ref{2.21}) with (\ref{2.20}), the splitting term is
given by
\begin{equation} \label{2.22}
\frac{\tau^2}{4}\delta_{\alpha,x}\delta_{\beta,y}(u_{i,j}^{n+1}-u_{i,j}^{n}),
\end{equation}
since $(u_{i,j}^{n+1}-u_{i,j}^n)$ is an
$\mathcal{O}(\tau)$ term, it implies  that this perturbation
contributes an $\mathcal{O}(\tau^2)$ error component to the truncation error of the  Crank-Nicolson finite difference method (\ref{2.18}).

The system of equations defined by (\ref{2.21}) can be solved by the following systems.

PR-ADI scheme \cite{Peaceman:55}:
\begin{equation}\label{2.23}
\left (1-\frac{\tau}{2}\delta_{\alpha,x} \right )u_{i,j}^{*}
 =\left(1+\frac{\tau}{2}\delta_{\beta,y}\right )u_{i,j}^{n}+ \frac{\tau}{2}
 f_{i,j}^{n+1/2};
\end{equation}
\begin{equation}\label{2.24}
\left(1-\frac{\tau}{2}\delta_{\beta,y}\right )u_{i,j}^{n+1}
=\left (1+\frac{\tau}{2}\delta_{\alpha,x}\right)u_{i,j}^{*}+
\frac{\tau}{2} f_{i,j}^{n+1/2}.
\end{equation}

 D-ADI scheme \cite{Douglas:55,Douglas:64,Douglas:01}:
\begin{equation}\label{2.25}
\left (1-\frac{\tau}{2}\delta_{\alpha,x} \right )u_{i,j}^{*}
 =\left(1+\frac{\tau}{2}\delta_{\alpha,x}+\tau\delta_{\beta,y}\right )u_{i,j}^{n}+\tau f_{i,j}^{n+1/2};
\end{equation}
\begin{equation}\label{2.26}
\left(1-\frac{\tau}{2}\delta_{\beta,y}\right )u_{i,j}^{n+1}
=u_{i,j}^{*}- \frac{\tau}{2}\delta_{\beta,y}u_{i,j}^{n}.
\end{equation}
Take
\begin{equation*}
\begin{split}
&\mathbf{U}^{n}=[u_{1,1}^n,u_{2,1}^n,\ldots,u_{N_x-1,1}^n,u_{1,2}^n,u_{2,2}^n,\dots,u_{N_x-1,2}^n,\ldots,u_{1,N_y-1}^n,u_{2,N_y-1}^n,\ldots,u_{N_x-1,N_y-1}^n]^T,\\
&\mathbf{F}^{n}=[f_{1,1}^n,f_{2,1}^n,\ldots,f_{N_x-1,1}^n,f_{1,2}^n,f_{2,2}^n,\dots,f_{N_x-1,2}^n,\ldots,f_{1,N_y-1}^n,f_{2,N_y-1}^n,\ldots,f_{N_x-1,N_y-1}^n]^T,\\
\end{split}
\end{equation*}
and denote
\begin{equation}\label{2.27}
\begin{split}
&\mathcal{B}_x =\frac{ d_1^x\tau}{2\Gamma(4-\alpha)(\Delta x)^{\alpha}}I \otimes A_{\alpha}
               +\frac{  d_2^x\tau}{2\Gamma(4-\alpha)(\Delta x)^{\alpha}}I \otimes A_{\alpha}^T +\frac{\kappa_x\tau }{4\Delta x}I \otimes B,\\
&\mathcal{B}_y =\frac{ d_1^y\tau}{2\Gamma(4-\beta)(\Delta y)^{\beta}} A_{\beta}  \otimes I
               +\frac{  d_2^y\tau}{2\Gamma(4-\beta)(\Delta y)^{\beta}}  A_{\beta}^T \otimes I +\frac{\kappa_y\tau }{4\Delta y}  B \otimes I,
\end{split}
 \end{equation}
where $I$  denotes the unit matrix and  the symbol $\otimes$ the Kronecker product \cite{Laub:05},
 the matrixes  $A_{\alpha}$, $A_{\beta}$ and $B$ are defined by (\ref{2.14}) corresponding to $\alpha$ and $\beta$, respectively. Thus, the finite difference scheme (\ref{2.21}) has the following form
\begin{equation}\label{2.28}
  (I-\mathcal{B}_x )(I-\mathcal{B}_y )\mathbf{U}^{n+1}= (I+\mathcal{B}_x )(I+\mathcal{B}_y )\mathbf{U}^{n}+\tau \mathbf{F}^{n+1/2}.
\end{equation}

\begin{remark}
The schemes (\ref{2.23})-(\ref{2.24}) and (\ref{2.25})-(\ref{2.26})
are equivalent, since both of them come from (\ref{2.21}).
\end{remark}

\subsection{D-ADI scheme for 3D}
Using the notations of (\ref{2.1})-(\ref{2.6}), we can write (\ref{1.1}) as the following form
\begin{equation}\label{2.29}
\begin{split}
 &\left (1-\frac{\tau}{2}\delta_{\alpha,x}-\frac{\tau}{2}\delta_{\beta,y}-\frac{\tau}{2}\delta_{\gamma,z}\right )u(x_i,y_j,z_m,t_{n+1})\\
 &\quad=\left (1+\frac{\tau}{2}\delta_{\alpha,x}+\frac{\tau}{2}\delta_{\beta,y}+\frac{\tau}{2}\delta_{\gamma,z}\right )u(x_i,y_j,z_m,t_{n})
 +  \tau f(x_i,y_j,z_m,t_{n+1/2})+ R_{i,j,m}^{n+1},
\end{split}
\end{equation}
where
\begin{equation}\label{2.30}
\begin{split}
 &\delta_{\alpha,x}:= \delta''_{\alpha,_+x} +  \delta''_{\alpha,_-x} + D''_{\alpha,x};\\
 &\delta_{\beta,y}:=\delta''_{\beta,_+y} +  \delta''_{\beta,_-y} + D''_{\beta,y};\\
 &\delta_{\gamma,z}:=\delta''_{\gamma,_+z} +  \delta''_{\gamma,_-z} +
 D''_{\gamma,z},
\end{split}
\end{equation}
and
\begin{equation}\label{2.31}
 |R_{i,j,m}^{n+1}|\leq \widetilde{c} \tau(\tau^2+(\Delta x)^2+(\Delta y)^2+(\Delta z)^2).
 \end{equation}

 Similarly, the full  discretization  scheme of   (\ref{1.1}) can be written as
\begin{equation}\label{2.32}
\begin{split}
 &\left (1-\frac{\tau}{2}\delta_{\alpha,x}-\frac{\tau}{2}\delta_{\beta,y}-\frac{\tau}{2}\delta_{\gamma,z}\right )u_{i,j,m}^{n+1}\\
 &\quad=\left (1+\frac{\tau}{2}\delta_{\alpha,x}+\frac{\tau}{2}\delta_{\beta,y}+\frac{\tau}{2}\delta_{\gamma,z}\right )u_{i,j,m}^{n}+  \tau f_{i,j,m}^{n+1/2}.
    \end{split}
\end{equation}
The perturbation equation of (\ref{2.32}) is of the form
\begin{equation}\label{2.33}
\begin{split}
 &  \left(1-\frac{\tau}{2}\delta_{\alpha,x}\right)      \left(1-\frac{\tau}{2}\delta_{\beta,y}\right)
    \left(1-\frac{\tau}{2} \delta_{\gamma,z}\right)  u_{i,j,m}^{n+1}\\
 &\quad =  \left (1+\frac{\tau}{2}\delta_{\alpha,x}\right) \left(1+\frac{\tau}{2}\delta_{\beta,y}\right )
          \left(1+\frac{\tau}{2}\delta_{\gamma,z}\right) u_{i,j,m}^{n}
 +f_{i,j,m}^{n+1/2}\tau.
\end{split}
\end{equation}
The scheme (\ref{2.33}) differs from (\ref{2.32}) by the perturbation term
$$ \frac{(\Delta
  t)^2}{4}(\delta_{\alpha,x}\delta_{\beta,y}+\delta_{\alpha,x}\delta_{\gamma,z}+\delta_{\beta,y}\delta_{\gamma,z})(u_{i,j,m}^{k+1}-u_{i,j,m}^n)
  - \frac{(\tau)^3}{8}\delta_{\alpha,x}\delta_{\beta,y}\delta_{\gamma,z}\left(u_{i,j,m}^{k+1}-u_{i,j,m}^n\right).$$

The system of equations defined by (\ref{2.33}) can be solved by the
D-ADI scheme \cite{Douglas:55,Douglas:64,Douglas:01}:
\begin{equation}\label{2.34}
\left (1-\frac{\tau}{2}\delta_{\alpha,x} \right )u_{i,j,m}^{n,1}
   =\left(1+\frac{\tau}{2}\delta_{\alpha,x}+\tau\delta_{\beta,y}+\tau\delta_{\gamma,z}\right )u_{i,j,m}^{n}+\tau f_{i,j,m}^{n+1/2};
\end{equation}
\begin{equation}\label{2.35}
\left(1-\frac{\tau}{2}\delta_{\beta,y}\right )u_{i,j,m}^{n,2}=u_{i,j,m}^{n,1}- \frac{\tau}{2}\delta_{\beta,y}u_{i,j,m}^{n};
\end{equation}
\begin{equation}\label{2.36}
\left(1-\frac{\tau}{2}\delta_{\gamma,z}\right )u_{i,j,m}^{n+1}=u_{i,j,m}^{n,2}- \frac{\tau}{2}\delta_{\gamma,z}u_{i,j,m}^{n}.
\end{equation}
Similarly, we  suppose
\begin{equation*}
\begin{split}
 \mathbf{\widetilde{U}}^{n}=
     &\left[u_{1,1,1}^n,u_{2,1,1}^n,\ldots,u_{N_x-1,1,1}^n,             u_{1,2,1}^n,u_{2,2,1}^n,\dots,u_{N_x-1,2,1}^n,\ldots,\right.\\
     &~~ u_{1,N_y-1,1}^n,u_{2,N_y-1,1}^n,\ldots,u_{N_x-1,N_y-1,1}^n,\rightarrow \\
     &~~u_{1,1,2}^n,u_{2,1,2}^n,\ldots,u_{N_x-1,1,2}^n,                 u_{1,2,2}^n,u_{2,2,2}^n,\dots,u_{N_x-1,2,2}^n,\ldots,\\
     & ~~ u_{1,N_y-1,2}^n,u_{2,N_y-1,2}^n,\ldots,u_{N_x-1,N_y-1,2}^n,\rightarrow\\
     &~~\ldots u_{1,1,N_z-1}^n,u_{2,1,N_z-1}^n,\ldots,u_{N_x-1,1,N_z-1}^n,     u_{1,2,N_z-1}^n,u_{2,2,N_z-1}^n,\dots,u_{N_x-1,2,N_z-1}^n,\ldots,\rightarrow\\
     &~ \left. u_{1,N_y-1,N_z-1}^n,u_{2,N_y-1,N_z-1}^n,\ldots,u_{N_x-1,N_y-1,N_z-1}^n\right]^T,
\end{split}
\end{equation*}
and
\begin{equation}\label{2.37}
\begin{split}
&\mathcal{A}_x =\frac{ d_1^x\tau}{2\Gamma(4-\alpha)(\Delta x)^{\alpha}} I \otimes I \otimes A_{\alpha}
 +\frac{  d_2^x\tau}{2\Gamma(4-\alpha)(\Delta x)^{\alpha}}I \otimes I \otimes A_{\alpha}^T +\frac{\kappa_x\tau }{4\Delta x}I \otimes I \otimes B,\\
&\mathcal{A}_y =\frac{ d_1^y\tau}{2\Gamma(4-\beta)(\Delta y)^{\beta}} I \otimes A_{\beta}  \otimes I
+\frac{  d_2^y\tau}{2\Gamma(4-\beta)(\Delta y)^{\beta}} I \otimes A_{\beta}^T \otimes I +\frac{\kappa_y\tau }{4\Delta y} I \otimes B \otimes I,\\
&\mathcal{A}_z =\frac{ d_1^z\tau}{2\Gamma(4-\beta)(\Delta z)^{\gamma}} A_{\gamma}  \otimes I \otimes I
+\frac{  d_2^z\tau}{2\Gamma(4-\gamma)(\Delta z)^{\gamma}}  A_{\gamma}^T \otimes I \otimes I +\frac{\kappa_z\tau }{4\Delta z}  B \otimes I \otimes I,
\end{split}
 \end{equation}
where $I$  denotes the unit matrix and  the symbol $\otimes$ the Kronecker product \cite{Laub:05},
 the matrixes  $A_{\alpha}$, $A_{\beta}$, $A_{\gamma}$ and $B$ are defined by (\ref{2.14}) corresponding to $\alpha$, $\beta$ and $\gamma$, respectively. Thus, the finite difference scheme (\ref{2.33}) has the following form
\begin{equation}\label{2.38}
  (I-\mathcal{A}_x )(I-\mathcal{A}_y )(I-\mathcal{A}_z )\mathbf{\widetilde{U}}^{n+1}= (I+\mathcal{A}_x )(I+\mathcal{A}_y )(I+\mathcal{A}_z )\mathbf{\widetilde{U}}^{n}+\tau \mathbf{\widetilde{F}}^{n+1/2}.
\end{equation}

The corresponding procedure is executed as follows:
\begin{description}
 \item[(1)] First for every fixed  $z=z_m$ $ (m=1,\ldots,{N_z}-1)$,
  and each fixed  $y=y_j$ $ (j=1,\ldots,{N_y}-1)$,
 solving a set of ${N_x}-1$ equations defined by (\ref{2.34}) at the mesh points $ x_k,k=1,\ldots,{N_x}-1$, to get
 $u_{k,j,m}^{n,1}$;
 \item[(2)] Next alternating the spatial direction, and for each fixed  $x=x_i$ $(i=1,\ldots,{N_x}-1)$,
 and each fixed  $z=z_m$ $(m=1,\ldots,{N_z}-1)$,
  solving a set of ${N_y}-1$  equations defined by (\ref{2.35}) at the points $y_k,k=1,\ldots,{N_y}-1$, to obtain $u_{i,k,m}^{n,2}$.
   \item[(3)] At last alternating the spatial direction again, and for each fixed  $y=y_j$ $(j=1,\ldots,{N_y}-1)$,
 and each fixed  $x=x_i$ $(i=1,\ldots,{N_x}-1)$,
  solving a set of ${N_z}-1$  equations defined by (\ref{2.36}) at the points $z_k,k=1,\ldots,{N_z}-1$, to gain $u_{i,j,k}^{n+1}$.
\end{description}

 \section{Improved Accuracy for D-ADI and FS Procedures}
This section shows that the idea of improving the accuracy of D-ADI
and FS procedures \cite{Douglas:01} also works well when used to
solve Riesz fractional diffusion equation $(1.1^\prime)$. For the simpleness to illustrate and discuss
this, we focuses on two-dimensional case of $(1.1^\prime)$. It is natural to
extend higher dimensions. The reason why we abruptly discuss FS
procedure here is because we want to show FS method is equivalent to
D-ADI after some minor modifications even when solving fractional
PDEs.


\subsection{Correction term for the D-ADI method }
The D-ADI scheme of (\ref{2.15}) introduces the splitting error term
(\ref{2.22}). Even though it is still with the order
$\mathcal{O}(\tau^2)$, sometimes it will seriously impair the
accuracy, see Table {\ref{01}}. If we add
\begin{equation} \label{3.1}
\frac{\tau^2}{4}\delta_{\alpha,x}\delta_{\beta,y}(u_{i,j}^n-u_{i,j}^{n-1})
\end{equation}
to the right hand side of (\ref{2.25}), then the new D-ADI, called
D-ADI-II, will have the splitting error
\begin{equation} \label{3.2}
\frac{\tau^2}{4}\delta_{\alpha,x}\delta_{\beta,y}(u_{i,j}^{n+1}-2u_{i,j}^n+u_{i,j}^{n-1}),
\end{equation}
then the splitting error is reduced to $\mathcal{O}(\tau^3)$.
The D-ADI-II is obviously two-step method; $u_{i,j}^1$ can be
obtained by D-ADI first, then initiate D-ADI-II.
\subsection{Correction term for the FS method }
The original FS method for (\ref{2.15}) should be
\begin{equation} \label{3.3}
 \begin{split}
  \frac{u_{i,j}^{n,1}-u_{i,j}^n}{\tau}&=\frac{1}{2}\delta_{\alpha,x} (u_{i,j}^{n,1}+u_{i,j}^n)+f^{n+\frac{1}{2}};\\
  \frac{u_{i,j}^{n+1}-u_{i,j}^{n,1}}{\tau}&=\frac{1}{2}\delta_{\beta,y}
  (u_{i,j}^{n+1}+u_{i,j}^n),
   \end{split}
\end{equation}
 it can be written as
 \begin{equation}\label{3.4}
\begin{split}
     (1-\frac{\tau}{2}\delta_{\alpha,x})u_{i,j}^{n,1}&=(1+\frac{\tau}{2}\delta_{\alpha,x})u_{i,j}^n+\tau f^{n+\frac{1}{2}};\\
     (1-\frac{\tau}{2}\delta_{\beta,y})u_{i,j}^{n+1}&=u_{i,j}^{n,1}+\frac{\tau}{2}\delta_{\beta,y}u^{n}.
\end{split}
\end{equation}
Comparing (\ref{3.3}) with (\ref{2.20}), the splitting term is
given by
\begin{equation} \label{3.5}
\frac{\tau^2}{4}\delta_{\alpha,x}\delta_{\beta,y}(u_{i,j}^{n+1}+u_{i,j}^{n}),
\end{equation}
which is of the order $\mathcal{O}(\tau)$  error component to the truncation error of the  Crank-Nicolson finite difference method (\ref{2.18}). However, if we add
\begin{equation} \label{3.6}
\frac{\tau^2}{4}\delta_{\alpha,x}\delta_{\beta,y}(3u_{i,j}^n-u_{i,j}^{n-1})
\end{equation}
to the right hand side of the first equation of (\ref{3.4}), then
we get the new FS method, called FS-II, with the splitting error
(\ref{3.2}), i.e.,  the splitting error is reduced to $\mathcal{O}(\tau^3)$ .

 The FS-II is equivalent to D-ADI-II,
 since both of them come from the following perturbation equation
 \begin{equation*}
\begin{split}
 &\left (1-\frac{\tau}{2}\delta_{\alpha,x} \right )\left(1-\frac{\tau}{2}\delta_{\beta,y}\right )u_{i,j}^{n+1}\\
 &\quad =\left (1+\frac{\tau}{2}\delta_{\alpha,x}\right)\left(1+\frac{\tau}{2}\delta_{\beta,y}\right )u_{i,j}^{n}
 +\frac{\tau^2}{4}\delta_{\alpha,x}\delta_{\beta,y}(u_{i,j}^{n}-u_{i,j}^{n-1})+ \tau f_{i,j}^{n+1/2},
\end{split}
\end{equation*}
i.e.,
 \begin{equation}\label{3.7}
\begin{split}
\frac{u_{i,j}^{n+1}-u_{i,j}^{n}}{\tau}=\frac{1}{2}\left(\delta_{\alpha,x}+ \delta_{\beta,y}  \right)(u_{i,j}^{n+1}+u_{i,j}^{n})
 -\frac{\tau}{4}\delta_{\alpha,x}\delta_{\beta,y}(u_{i,j}^{n+1}-2u_{i,j}^{n}+u_{i,j}^{n-1})+  f_{i,j}^{n+1/2}.
\end{split}
\end{equation}

  \subsection{Accuracy and efficiency of the D-ADI, D-ADI-II, and FS-II methods}

To check the accuracy and efficiency of the D-ADI, D-ADI-II, and FS-II schemes,
we consider the two-dimensional case of the Riesz fractional equation $(1.1^\prime)$, on a finite domain $ 0<  x< 1,\,0<  y< 1$, $0< t \leq 1$, with  the coefficients
$
  d_1^x=d_1^y=1,
$
and the initial condition $u(x,y,0)=sin((2x)^4)sin((2-2x)^4)sin((2y)^2)sin((2-2y)^2)$ and the
Dirichlet boundary conditions on the rectangle in the form $
u(0,y,t)=u(x,0,t)=0$ and $ u(1,y,t)=u(x,1,t)=0$
 for all $ t\geq 0$. The exact solution to this two-dimensional Riesz fractioanl diffusion equation is
 \begin{equation*}
\begin{split}
u(x,y,t)=e^{-t}sin((2x)^4)sin((2-2x)^4)sin((2y)^4)sin((2-2y)^4).
  \end{split}
  \end{equation*}
By the algorithm given in \cite{Deng:07} and above conditions, it is easy to obtain the forcing function $f(x,y,t)$
at anywhere of the considered rectangle domain with any desired accuracy.

%


\begin{table}  [h]  \fontsize{9.5pt}{12pt}  \selectfont
\begin{center}
\caption{The performance of the D-ADI, D-ADI-II, and FS-II methods with
$\Delta x =\Delta y=1/100$, and the maximum errors (\ref{5.1}). }\vspace{5pt}

    \begin{tabular*}{\linewidth}{@{\extracolsep{\fill}}*{5}{c|c|c|c|c }}          \hline
         $\alpha=1.9,~\beta=1.9$ & $\tau = 10\Delta x$     & $\tau=5\Delta x$     & $\tau=5\Delta x/2$     & $\tau=\Delta x $     \\\hline
         ~~D-ADI                 &3.3496e-02             & 7.6895e-03         & 2.0624e-03        &6.0826e-04           \\\hline
         ~~FS-II               &2.2638e-03            &1.7249e-04        & 2.7765e-04       &3.3166e-04               \\\hline
         ~~D-ADI-II             &2.2638e-03           &1.7249e-04        &2.7765e-04       &3.3166e-04            \\\hline
\end{tabular*}{\label{01}}
\end{center}
\end{table}

From Table {\ref{01}}, we further verify that the
D-ADI-II is equivalent to the FS-II method, and they may reduce the
perturbation error of D-ADI procedure and improve the accuracy.

\section{Convergence and Stability Analysis}

In the following, we denote by $H$ the symmetric (or hermitian) part of $A$ if $A$ is real (or complex) matrix,
and  $||\cdot||$ the matrix 2-norm.

\begin{lemma}\cite{Chen:12,Sousa:12}\label{lemma1}
The coefficients $g_l^{\mu}$, $\mu \in(1,2)$ defined in (\ref{2.3}) satisfy the following properties
\begin{equation*}\label{3.27}
  \begin{split}
&(1) ~~g_0^{\mu}=1, g_1^{\mu}=-4+2^{3-\mu}<0,g_2^{\mu}=6-2^{5-\mu}+3^{3-\mu};\\
&(2) ~~1 \geq g_0^{\mu} \geq g_3^{\mu} \geq g_4^{\mu} \geq \ldots \geq 0;\\
&(3) ~~\sum\limits_{l=0}^{\infty}g_l^{\mu}=0, ~~  \sum\limits_{l=0}^{m}g_l^{\mu}<0, m \geq 2.
  \end{split}
\end{equation*}
\end{lemma}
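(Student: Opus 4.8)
The plan is to recognise $g_l^{\mu}$ as a fourth-order finite difference of a truncated power function and then read every property off the sign pattern of that function's derivatives. Set $s=3-\mu\in(1,2)$ and define $f$ on $\mathbb{R}$ by $f(x)=x^{s}$ for $x>0$ and $f(x)=0$ for $x\le 0$; then $f$ is continuous on $\mathbb{R}$ and $C^{\infty}$ on $(0,\infty)$. Letting $\Delta$ denote the unit forward difference, a term-by-term check (the coefficients $1,-4,6,-4,1$ being $(-1)^{k}\binom{4}{k}$) shows
\[
 g_l^{\mu}=(\Delta^{4}f)(l-3)=\sum_{k=0}^{4}(-1)^{k}\binom{4}{k}f(l+1-k)\qquad\text{for every }l\ge 0 ,
\]
and the instances $l=0,1,2$ of this identity are exactly part (1), with $g_1^{\mu}=2^{3-\mu}-4<0$ immediate from $3-\mu<2$. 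On $(0,\infty)$ one computes $f'''(x)=s(s-1)(s-2)x^{s-3}<0$, $f^{(4)}(x)=s(s-1)(s-2)(s-3)x^{s-4}>0$, $f^{(5)}(x)=s(s-1)(s-2)(s-3)(s-4)x^{s-5}<0$, the signs coming from $s,s-1>0$ and $s-2,s-3,s-4<0$; moreover each of $f''',f^{(4)},f^{(5)}$ is monotone on $(0,\infty)$. These facts drive the rest.

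For part (2): when the five nodes $l-3,\dots,l+1$ all lie in $(0,\infty)$, i.e.\ for $l\ge 4$, the mean-value form of divided differences gives $g_l^{\mu}=f^{(4)}(\xi_l)>0$ and $g_l^{\mu}-g_{l+1}^{\mu}=-(\Delta^{5}f)(l-3)=-f^{(5)}(\eta_l)>0$. For the borderline index $l=3$, whose node set includes the non-smooth point $x=0$, I would pass to the limit $\varepsilon\downarrow 0$ in $(\Delta^{4}f)(\varepsilon)=f^{(4)}(\xi_\varepsilon)$ and $(\Delta^{5}f)(\varepsilon)=f^{(5)}(\eta_\varepsilon)$, using continuity of $f$ and monotonicity of $f^{(4)},f^{(5)}$ to obtain $g_3^{\mu}\ge f^{(4)}(4)>0$ and $g_3^{\mu}-g_4^{\mu}\ge -f^{(5)}(5)>0$. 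This yields $g_3^{\mu}>g_4^{\mu}>\cdots>0$; the single remaining link $g_0^{\mu}=1\ge g_3^{\mu}$ comes once part (3) is in hand, because then $g_3^{\mu}\le\sum_{l\ge 3}g_l^{\mu}=-(g_0^{\mu}+g_1^{\mu}+g_2^{\mu})=3\cdot 2^{s}-3^{s}-3$, and an elementary one-variable estimate gives $3\cdot 2^{s}-3^{s}<4$ on $[1,2]$ ($\psi(s):=3\cdot 2^{s}-3^{s}$ is concave there, hence lies below its tangent at $s=1$, whose value at $s=2$ equals $3+\ln(64/27)<4$).

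For part (3): the telescoping identity $(\Delta^{4}f)(k)=(\Delta^{3}f)(k+1)-(\Delta^{3}f)(k)$ together with $(\Delta^{3}f)(-3)=f(0)-3f(-1)+3f(-2)-f(-3)=0$ gives $\sum_{l=0}^{m}g_l^{\mu}=(\Delta^{3}f)(m-2)$. For $m\ge 3$ the four relevant nodes are positive, so this equals $f'''(\xi)<0$; letting $m\to\infty$ and using $f'''(\xi_m)=s(s-1)(s-2)\xi_m^{s-3}\to0$ forces $\sum_{l\ge 0}g_l^{\mu}=0$. For $m=2$ one has $\sum_{l=0}^{2}g_l^{\mu}=(\Delta^{3}f)(0)$, and the same limiting argument (now with $f'''$ monotone on $(0,\infty)$) gives $(\Delta^{3}f)(0)\le f'''(3)<0$. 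Assembling (2) and (3) completes the lemma.

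I expect the only genuinely delicate point to be $g_0^{\mu}\ge g_3^{\mu}$: it compares $(\Delta^{4}f)$ at $-3$ and at $0$, i.e.\ across the non-smooth point $x=0$, so it is out of reach of the (limiting) divided-difference mean-value theorem and needs the explicit estimate for $\psi$ on $[1,2]$. Everything at indices $l\ge 4$ and $m\ge 3$, and the one-sided limits at $x=0$, is mechanical once the representation $g_l^{\mu}=(\Delta^{4}f)(l-3)$ and the derivative-sign facts are established.
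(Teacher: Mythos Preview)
The paper does not prove this lemma: it is quoted from \cite{Chen:12,Sousa:12} and stated without argument, so there is no in-paper proof to compare against.

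Your approach is sound and self-contained. The key representation $g_l^{\mu}=(\Delta^{4}f)(l-3)$ with $f(x)=x_{+}^{3-\mu}$ is correct (the cases $l=0,1,2$ and $l\ge 3$ match (2.3) term by term), and the sign pattern $f'''<0$, $f^{(4)}>0$, $f^{(5)}<0$ on $(0,\infty)$ for $s=3-\mu\in(1,2)$ is exactly what drives the monotonicity and sign claims via the mean-value theorem for finite differences. The limiting device at the borderline nodes (where $x=0$ enters the stencil) is legitimate because $f$ is continuous at $0$ and each of $f''',f^{(4)},f^{(5)}$ is monotone on $(0,\infty)$, so the one-sided bounds survive the passage $\varepsilon\downarrow 0$. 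The telescoping to $\sum_{l=0}^{m}g_l^{\mu}=(\Delta^{3}f)(m-2)$ is correct since $(\Delta^{3}f)(-3)=0$, and the $m\to\infty$ limit uses $s-3<0$ correctly. The one nontrivial link, $g_0^{\mu}\ge g_3^{\mu}$, is handled cleanly: your concavity argument for $\psi(s)=3\cdot 2^{s}-3^{s}$ on $[1,2]$ gives $\psi(s)\le 3+\ln(64/27)<4$, whence $g_3^{\mu}\le \sum_{l\ge 3}g_l^{\mu}=\psi(s)-3<1=g_0^{\mu}$. Incidentally, the same $\psi$ also settles the $m=2$ case of (3) directly, since $\sum_{l=0}^{2}g_l^{\mu}=3-\psi(s)$ and $\psi(1)=\psi(2)=3$ with $\psi$ strictly concave forces $\psi(s)>3$ on $(1,2)$; this is a slightly cleaner alternative to the limiting bound $(\Delta^{3}f)(0)\le f'''(3)$ you give.
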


\begin{lemma}\cite[p.\,28]{Quarteroni:07}\label{lemma2}
A real matrix $A$ of order n is positive definite  if and only if  its symmetric part $H=\frac{A+A^T}{2}$ is positive definite;
$H$ is positive definite if and only if the eigenvalues of $H$ are positive.
\end{lemma}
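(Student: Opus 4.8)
The plan is to reduce both equivalences to the elementary identity that a scalar equals its own transpose. First I would observe that for any $x \in \mathbb{R}^n$ the quantity $x^T A x$ is a $1\times 1$ matrix, hence $x^T A x = (x^T A x)^T = x^T A^T x$; averaging the two expressions gives $x^T A x = x^T \frac{A+A^T}{2} x = x^T H x$ for every $x$. Consequently $A$ is positive definite (meaning $x^T A x > 0$ for all $x \neq 0$) if and only if $x^T H x > 0$ for all $x \neq 0$, which is exactly the assertion that $H$ is positive definite. This disposes of the first equivalence without any spectral input, and uses only that $H$ is by construction symmetric.

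For the second equivalence I would invoke the spectral theorem: since $H$ is real symmetric, there exist an orthogonal matrix $Q$ and a real diagonal matrix $\Lambda = \mathrm{diag}(\lambda_1,\dots,\lambda_n)$ with $H = Q\Lambda Q^T$. Given $x\neq 0$, set $y = Q^T x$, which is again nonzero since $Q$ is invertible; then $x^T H x = y^T \Lambda y = \sum_{i=1}^n \lambda_i y_i^2$. If every $\lambda_i > 0$ this sum is strictly positive, so $H$ is positive definite. Conversely, if some $\lambda_k \le 0$, choosing $x$ to be the $k$-th column of $Q$ yields $x^T H x = \lambda_k \le 0$, contradicting positive definiteness; hence positive definiteness forces all eigenvalues of $H$ to be positive.

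There is essentially no technical obstacle here --- the argument is routine linear algebra, and the only ingredient beyond the scalar-transpose trick is the spectral theorem for symmetric matrices. The one point worth stating carefully is the convention that ``positive definite'' for a non-symmetric $A$ refers to the quadratic form $x \mapsto x^T A x$ rather than to the eigenvalues of $A$ itself (which need not even be real); once that convention is fixed, the proof is immediate. The same reasoning, with transposes replaced by conjugate transposes and $H = \frac{A + A^*}{2}$, covers the Hermitian case alluded to in the sentence preceding the lemma.
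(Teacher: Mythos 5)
Your proof is correct. Note that the paper does not prove this lemma at all --- it is quoted verbatim from Quarteroni, Sacco and Saleri's \emph{Numerical Mathematics} (p.~28) and used as a black box --- so there is no internal proof to compare against; your argument (the scalar identity $x^TAx = x^TA^Tx$ giving $x^TAx = x^THx$ for the first equivalence, and the spectral theorem for the symmetric matrix $H$ for the second) is exactly the standard textbook justification, and your remark about fixing the convention that positive definiteness of a non-symmetric $A$ refers to the quadratic form is the right point to make explicit, since it is precisely this convention the paper relies on in Theorem 4.1.
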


\begin{lemma}\cite[p.\,184]{Quarteroni:07}\label{lemma3}
If $A \in \mathbb{C}^{n \times n}$, let $H=\frac{A+A^H}{2}$ be the hermitian part of $A$,  then for any eigenvalue $\lambda$ of  $A$,
the real part $\Re(\lambda(A))$ satisfies
\begin{equation*}
  \lambda_{min}(H) \leq \Re(\lambda(A)) \leq \lambda_{max}(H),
\end{equation*}
where $\lambda_{min}(H)$ and $\lambda_{max}(H)$ are the minimum and maximum of the eigenvalues of $H$, respectively.
\end{lemma}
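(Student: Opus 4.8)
The plan is to combine the Rayleigh quotient characterization of the extreme eigenvalues of a Hermitian matrix with the elementary identity that relates the real part of an eigenvalue of $A$ to a quadratic form in $H$. First I would fix an eigenvalue $\lambda$ of $A$ and pick a corresponding eigenvector $x\in\mathbb{C}^{n}$ normalized so that $x^{H}x=1$; then $Ax=\lambda x$ gives $x^{H}Ax=\lambda$. Since $x^{H}Ax$ is a scalar, its complex conjugate coincides with its conjugate transpose, that is $\overline{x^{H}Ax}=x^{H}A^{H}x$, and therefore
\begin{equation*}
  \Re(\lambda)=\frac{1}{2}\bigl(x^{H}Ax+\overline{x^{H}Ax}\bigr)
  =x^{H}\Bigl(\frac{A+A^{H}}{2}\Bigr)x=x^{H}Hx .
\end{equation*}

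Next I would invoke the spectral theorem for the Hermitian matrix $H$: write $H=U\Lambda U^{H}$ with $U$ unitary and $\Lambda=\mathrm{diag}(\mu_{1},\dots,\mu_{n})$ real, and set $y=U^{H}x$, so that $y^{H}y=1$ and $x^{H}Hx=\sum_{k=1}^{n}\mu_{k}|y_{k}|^{2}$. The right-hand side is a convex combination of the real numbers $\mu_{k}$, hence it lies between $\lambda_{min}(H)$ and $\lambda_{max}(H)$. Combining this with the displayed identity yields $\lambda_{min}(H)\le\Re(\lambda)\le\lambda_{max}(H)$, which is the assertion; the real-matrix specialization of the same identity, $x^{T}Ax=x^{T}Hx$ for real $x$, is what underlies Lemma~\ref{lemma2}.

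The only point that needs attention is that the eigenvector $x$ of $A$ is in general not an eigenvector of $H$, so one cannot evaluate $x^{H}Hx$ by inspection; the Rayleigh quotient bound --- equivalently, the fact that the numerical range of a Hermitian matrix is exactly the interval $[\lambda_{min}(H),\lambda_{max}(H)]$ --- is precisely what closes this gap, and it is the single place where Hermiticity of $H$ (rather than of $A$) is used. Apart from that, the argument is a short computation and presents no real obstacle.
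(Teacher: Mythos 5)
Your argument is correct: the identity $\Re(\lambda)=x^{H}Hx$ for a unit eigenvector $x$ of $A$, followed by the Rayleigh quotient (numerical range) bound for the Hermitian matrix $H$, is exactly the standard proof of this Bendixson-type estimate. The paper itself offers no proof --- it simply quotes the result from the cited reference --- so there is nothing to compare beyond noting that your derivation is the expected one and is sound.
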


\begin{theorem}\label{theorem1}
Let  matrix $A_{\alpha}$ be defined by (\ref{2.14}), where $\alpha \in (1,2)$, then for any eigenvalue $\lambda$ of $A_{\alpha}$, the real part  $\Re(\lambda(A_{\alpha}))<0$,
and the matrix $A_{\alpha}$ is negative definite.
Moreover, $\Re(\lambda(d_1A_{\alpha}+d_2A_{\alpha}^T))<0$, where
$d_1,d_2\geq 0$, $d_1^2+d_2^2\neq 0$.
\end{theorem}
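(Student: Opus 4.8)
The plan is to establish the spectral statement by applying Lemma~\ref{lemma3}: for any eigenvalue $\lambda$ of $A_\alpha$ one has $\Re(\lambda(A_\alpha))\le \lambda_{\max}(H)$, where $H=\tfrac12(A_\alpha+A_\alpha^T)$ is the symmetric part. Hence it suffices to show $\lambda_{\max}(H)<0$, i.e.\ that $H$ is negative definite; by Lemma~\ref{lemma2} this is equivalent to $-H$ being positive definite. The matrix $A_\alpha$ in (\ref{2.14}) is a lower Hessenberg Toeplitz matrix with symbol built from the coefficients $g_l^\alpha$, so $H$ is the symmetric Toeplitz matrix whose first row (and column) is $g_1^\alpha,\ \tfrac12(g_0^\alpha+g_2^\alpha),\ \tfrac12 g_3^\alpha,\ \tfrac12 g_4^\alpha,\dots$ (with the $g_0^\alpha$ entry appearing once off the subdiagonal).

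First I would record the relevant sign facts from Lemma~\ref{lemma1}: $g_0^\alpha=1>0$, $g_1^\alpha=-4+2^{3-\alpha}<0$, $1\ge g_0^\alpha\ge g_3^\alpha\ge g_4^\alpha\ge\cdots\ge 0$, $\sum_{l=0}^\infty g_l^\alpha=0$, and $\sum_{l=0}^m g_l^\alpha<0$ for $m\ge 2$. Next I would invoke a Gershgorin-type / diagonal-dominance argument on $H$ (or equivalently on $-H$): the diagonal entries of $H$ all equal $g_1^\alpha<0$, and I want to bound the diagonal entry against the sum of the absolute values of the off-diagonal entries in its row. The off-diagonal entries in a generic row are, reading outward, $\tfrac12(g_0^\alpha+g_2^\alpha)$ on the first super/sub-diagonal, $\tfrac12 g_0^\alpha$ appearing (from the lone $g_0^\alpha$ in $A_\alpha$) two positions away on one side, and $\tfrac12 g_l^\alpha$ for $l\ge 3$ further out; all $g_l^\alpha\ge 0$ for $l\ge 3$ by part~(2), so those contribute their own value. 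Summing one full (two-sided) off-diagonal row of $H$ gives essentially $\sum_{l=0,\,l\ne1}^\infty g_l^\alpha = -g_1^\alpha - \big(\text{tail corrections from truncation at the boundary}\big)$, and since $\sum_{l\ge0}g_l^\alpha=0$ we get that the off-diagonal absolute row sum is $\le -g_1^\alpha=|g_1^\alpha|$, with strict inequality for boundary rows because of the partial-sum negativity $\sum_{l=0}^m g_l^\alpha<0$. Thus $-H$ is (at least weakly) diagonally dominant with negative... rather, $-H$ has positive diagonal $-g_1^\alpha$ and is diagonally dominant; combined with strict dominance in the first/last rows and irreducibility of the tridiagonal-plus part, irreducible diagonal dominance forces $-H$ positive definite. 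This yields $\lambda_{\max}(H)<0$ and hence $\Re(\lambda(A_\alpha))<0$ and $A_\alpha$ negative definite.

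For the last assertion, note that the symmetric part of $d_1A_\alpha+d_2A_\alpha^T$ is $\tfrac12\big((d_1+d_2)(A_\alpha+A_\alpha^T)\big)\cdot\tfrac{?}{}$ — more precisely $\tfrac12\big(d_1A_\alpha+d_2A_\alpha^T+d_1A_\alpha^T+d_2A_\alpha\big)=(d_1+d_2)H$. Since $d_1,d_2\ge0$ and $d_1^2+d_2^2\ne0$ we have $d_1+d_2>0$, so $(d_1+d_2)H$ is negative definite, i.e.\ $\lambda_{\max}\big((d_1+d_2)H\big)<0$, and Lemma~\ref{lemma3} applied to $d_1A_\alpha+d_2A_\alpha^T$ gives $\Re\big(\lambda(d_1A_\alpha+d_2A_\alpha^T)\big)\le(d_1+d_2)\lambda_{\max}(H)<0$.

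The main obstacle I anticipate is the bookkeeping at the \emph{boundary rows} of the finite matrix $A_\alpha$: because the Toeplitz structure is truncated, the outgoing coefficient sums in the first and last rows are genuine partial sums $\sum_{l=0}^m g_l^\alpha$ rather than the full (zero) sum, and I must check that these truncations only \emph{help} (i.e.\ keep the row sum of $|{\rm off\text{-}diag}|$ strictly below $|g_1^\alpha|$) — this is exactly where part~(3)'s strict inequality $\sum_{l=0}^m g_l^\alpha<0$ for $m\ge2$ is needed, and where one must be careful that the single stray $g_0^\alpha$ entry in each row of $A_\alpha$ is accounted for on the correct side. Getting the irreducibility/strict-dominance hypothesis of the relevant positive-definiteness criterion cleanly verified is the delicate point; everything else is routine.
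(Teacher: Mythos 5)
Your proposal follows essentially the same route as the paper: form the symmetric part $H=\tfrac12(A_\alpha+A_\alpha^T)$, bound its eigenvalues by a Gershgorin/diagonal-dominance argument based on the coefficient properties in Lemma~\ref{lemma1}, conclude via Lemmas~\ref{lemma2} and \ref{lemma3}, and treat $d_1A_\alpha+d_2A_\alpha^T$ through its symmetric part $(d_1+d_2)H$, exactly as in the paper's proof. Two small corrections that do not change the outcome: the lone $g_0^\alpha$ of $A_\alpha$ merges with $g_2^\alpha$ into the entry $\tfrac12(g_0^\alpha+g_2^\alpha)$ on the first off-diagonal of $H$ (there is no additional $\tfrac12 g_0^\alpha$ two positions away), and since Lemma~\ref{lemma1}(3) gives $\sum_{l=0}^m g_l^\alpha<0$ strictly for every finite $m$, the off-diagonal absolute row sum is strictly below $-g_1^\alpha$ in \emph{every} row of the finite matrix (given $g_0^\alpha+g_2^\alpha>0$, which should be checked), so plain Gershgorin suffices and the irreducible-diagonal-dominance fallback you anticipated is not needed.
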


\begin{proof}
  Let $H=\frac{A_{\alpha}+A_{\alpha}^T}{2}$, by (\ref{2.14})  we know
  \begin{equation}\label{4.1}
H\equiv (h_{i,j})=\frac{1}{2}\left [ \begin{matrix}
2g_1^{\alpha}            &g_0^{\alpha}+g_2^{\alpha}&g_3^{\alpha}             &      \cdots              &  g_{N_x-2}^{\alpha}     &  g_{N_x-1}^{\alpha}  \\
g_0^{\alpha}+g_2^{\alpha}&        2g_1^{\alpha}    &g_0^{\alpha}+g_2^{\alpha}&       g_3^{\alpha}       &     \cdots              &  g_{N_x-2}^{\alpha} \\
g_3^{\alpha}             &g_0^{\alpha}+g_2^{\alpha}&2g_1^{\alpha}            & g_0^{\alpha}+g_2^{\alpha}&     \ddots              & \vdots  \\
\vdots                   &          \ddots         &       \ddots            &        \ddots            &      \ddots             &  g_3^{\alpha}  \\
g_{N_x-2}^{\alpha}       &          \ddots         &       \ddots            &        \ddots            &   2g_1^{\alpha}         & g_0^{\alpha}+g_2^{\alpha} \\
g_{N_x-1}^{\alpha}       &    g_{N_x-2}^{\alpha}   &    g_3^{\alpha}         &         \cdots           &g_0^{\alpha}+g_2^{\alpha}& 2g_1^{\alpha}
 \end{matrix}
 \right ].
\end{equation}

From Lemma \ref{lemma1}, it is easy to check that $g_0^{\alpha}+g_2^{\alpha}>0$,  and the sum of the  absolute value of the off-diagonal entries on the row $i$ of matrix $H$ is given by
\begin{equation*}
  r_i= \sum\limits_{j=1,j\neq i}^{N_x-1}\!\!\!\! |h_{i,j}| < -g_1^{\alpha}.
\end{equation*}

According to the Greschgorin theorem  \cite[p.\,135]{Isaacson:66}, the eigenvalues of the matrix $H$ are in the disks centered at $h_{i,i}$, with radius
$r_i$, i.e.,  the eigenvalues $\lambda$ of the matrix  $H$ satisfies
\begin{equation*}
  |\lambda -h_{i,i} | =|\lambda -g_1^{\alpha} |\leq r_i,
\end{equation*}
it implies that $\lambda(H)<0$. From Lemma \ref{lemma2} and \ref{lemma3},
we obtain that $\Re(\lambda(A_{\alpha}))<0$ and $A$ is negative definite.
Taking
 \begin{equation*}
   \widetilde{H}=\frac{(d_1A_{\alpha}+d_2A_{\alpha}^T)+(d_1A_{\alpha}+d_2A_{\alpha}^T)^T}{2}=(d_1+d_2)\frac{A_{\alpha}+A_{\alpha}^T}{2}=(d_1+d_2)H,
 \end{equation*}
 similarly, we can prove  $\Re(\lambda(d_1A_{\alpha}+d_2A_{\alpha}^T))<0$.
\end{proof}

In the following, we list some properties of the Kronecker Product.
\begin{lemma}\cite[p.\,140]{Laub:05}\label{lemma4}
Let $A \in \mathbb{R}^{m\times n}$, $B \in \mathbb{R}^{r\times s}$, $C \in \mathbb{R}^{n\times p}$, and $D \in \mathbb{R}^{s\times t}$.
Then
\begin{equation*}
  (A \otimes B)(C \otimes D)=AC \otimes  BD \quad (\in \mathbb{R}^{mr\times pt}).
\end{equation*}
\end{lemma}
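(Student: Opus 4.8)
The plan is to prove the mixed-product identity by direct block-matrix multiplication, exploiting the block structure that \emph{defines} the Kronecker product. First I would recall the definition: writing $A=(a_{ik})$ with $1\le i\le m$, $1\le k\le n$, the matrix $A\otimes B$ is the $mr\times ns$ array whose $(i,k)$ block equals $a_{ik}B\in\mathbb{R}^{r\times s}$; likewise $C\otimes D$ is the $ns\times pt$ array whose $(k,j)$ block equals $c_{kj}D\in\mathbb{R}^{s\times t}$. Before computing, I would check dimension compatibility so the product is well defined: $A\otimes B$ has $ns$ columns and $C\otimes D$ has $ns$ rows, so $(A\otimes B)(C\otimes D)$ exists and lies in $\mathbb{R}^{mr\times pt}$, which matches the claimed size of $AC\otimes BD$ since $AC\in\mathbb{R}^{m\times p}$ and $BD\in\mathbb{R}^{r\times t}$.

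The core step is to evaluate the $(i,j)$ block of the left-hand side using block multiplication, partitioning the first factor into $r\times s$ blocks and the second into $s\times t$ blocks. Because the inner block dimensions align ($B$ has $s$ columns, $D$ has $s$ rows), the $(i,j)$ block is the sum over $k=1,\ldots,n$ of the product of the $(i,k)$ block of the first factor with the $(k,j)$ block of the second:
\[
\sum_{k=1}^{n}(a_{ik}B)(c_{kj}D)=\left(\sum_{k=1}^{n}a_{ik}c_{kj}\right)BD=(AC)_{ij}\,BD.
\]
Since $(AC)_{ij}\,BD$ is exactly the $(i,j)$ block of $AC\otimes BD$, the two matrices agree block by block over all $i=1,\ldots,m$ and $j=1,\ldots,p$, and are therefore equal.

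The argument has no genuine obstacle; it is a bookkeeping verification rather than a substantive one. The only care needed is to set up the block partition consistently so that ordinary block multiplication applies, and to confirm that the scalars $a_{ik}$ and $c_{kj}$ factor cleanly out of each block product $(a_{ik}B)(c_{kj}D)$. Once that is in place, the identity follows immediately from the bilinearity of matrix multiplication in the scalar entries of $A$ and $C$, so the whole proof reduces to the single displayed computation above.
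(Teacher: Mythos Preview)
Your block-multiplication argument is correct and is the standard textbook proof of the mixed-product property. The paper does not prove this lemma at all; it is simply quoted from \cite[p.\,140]{Laub:05}, so there is nothing to compare against.
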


\begin{lemma}\cite[p.\,140]{Laub:05}\label{lemma5}
For all $A$ and $B$, $(A \otimes B)^T=A^T\otimes B^T$.
\end{lemma}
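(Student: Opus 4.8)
The plan is to verify the identity directly from the block definition of the Kronecker product, tracking a single scalar entry through the transpose. Write $A=(a_{ij})\in\mathbb{R}^{m\times n}$ and $B=(b_{k\ell})\in\mathbb{R}^{r\times s}$. By definition, $A\otimes B$ is the $mr\times ns$ matrix built from $m\times n$ blocks of size $r\times s$, the $(i,j)$ block being $a_{ij}B$; equivalently, its entry in row $(i-1)r+k$ and column $(j-1)s+\ell$ equals $a_{ij}b_{k\ell}$, where $1\le i\le m$, $1\le j\le n$, $1\le k\le r$, $1\le\ell\le s$.

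First I would transpose and read off entries. By the definition of the matrix transpose, the entry of $(A\otimes B)^{T}$ in row $(j-1)s+\ell$ and column $(i-1)r+k$ is the entry of $A\otimes B$ in row $(i-1)r+k$, column $(j-1)s+\ell$, i.e. $a_{ij}b_{k\ell}$. Next I would write out $A^{T}\otimes B^{T}$ using the same block description: since $A^{T}\in\mathbb{R}^{n\times m}$ has $(A^{T})_{ji}=a_{ij}$ and $B^{T}\in\mathbb{R}^{s\times r}$ has $(B^{T})_{\ell k}=b_{k\ell}$, the entry of $A^{T}\otimes B^{T}$ in row $(j-1)s+\ell$ and column $(i-1)r+k$ is $(A^{T})_{ji}(B^{T})_{\ell k}=a_{ij}b_{k\ell}$. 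Both matrices have size $ns\times mr$ and their entries coincide for every admissible $(i,j,k,\ell)$, hence $(A\otimes B)^{T}=A^{T}\otimes B^{T}$.

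There is no real obstacle here; the whole argument is index bookkeeping, and the only point demanding care is to fix the block-plus-offset convention once and for all and then apply it consistently on both sides — which is precisely what the entrywise comparison above does. If one prefers to avoid indices entirely, the identity also follows from Lemma \ref{lemma4} together with the vectorization formula $\mathrm{vec}(XYZ)=(Z^{T}\otimes X)\,\mathrm{vec}(Y)$, but that route invokes more machinery than the statement warrants, so I would present the direct computation above.
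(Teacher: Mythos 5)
Your entrywise verification is correct: the block-plus-offset indexing is set up consistently, both sides are shown to be $ns\times mr$ matrices with identical entries $a_{ij}b_{k\ell}$, and this is exactly the standard argument given in the cited reference (the paper itself states Lemma~\ref{lemma5} without proof, quoting Laub). Nothing further is needed; your remark that the vectorization route is avoidable is also apt, since the direct index computation is the most elementary path.
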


\begin{lemma}\cite[p.\,141]{Laub:05}\label{lemma6}
Let $A \in \mathbb{R}^{n\times n}$ have eigenvalues $\{\lambda_i\}_{i=1}^n$ and $B \in \mathbb{R}^{m\times m}$ have eigenvalues $\{\mu_j\}_{j=1}^m$.
Then the $mn$ eigenvalues of $A \otimes B$ are
\begin{equation*}
  \lambda_1\mu_1,\ldots,\lambda_1\mu_m, \lambda_2\mu_1,\ldots,\lambda_2\mu_m,\ldots,\lambda_n\mu_1\ldots,\lambda_n\mu_m.
\end{equation*}
\end{lemma}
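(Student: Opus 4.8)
The plan is to prove the statement by combining the Schur triangularization theorem with the two Kronecker-product identities already in hand, namely the mixed-product rule (Lemma \ref{lemma4}) and the transpose rule (Lemma \ref{lemma5}); I will use the latter in its conjugate-transpose form, since the one-line index computation that proves Lemma \ref{lemma5} equally yields $(A\otimes B)^H=A^H\otimes B^H$. First I would invoke the Schur decomposition: every square complex matrix is unitarily similar to an upper triangular matrix carrying its eigenvalues on the diagonal, so write $A=QRQ^H$ and $B=PSP^H$ with $Q,P$ unitary and $R,S$ upper triangular, where the diagonal of $R$ is $(\lambda_1,\dots,\lambda_n)$ and the diagonal of $S$ is $(\mu_1,\dots,\mu_m)$.

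Next I would push the Kronecker product through these factorizations. Applying Lemma \ref{lemma4} twice gives $A\otimes B=(QRQ^H)\otimes(PSP^H)=(Q\otimes P)(R\otimes S)(Q^H\otimes P^H)$, and by the conjugate form of Lemma \ref{lemma5} the last factor equals $(Q\otimes P)^H$. A short verification shows that $Q\otimes P$ is unitary: $(Q\otimes P)(Q\otimes P)^H=(Q\otimes P)(Q^H\otimes P^H)=(QQ^H)\otimes(PP^H)=I\otimes I=I$, again by Lemma \ref{lemma4}. Hence $A\otimes B$ is unitarily similar to $R\otimes S$, and the two matrices therefore share the same characteristic polynomial, hence the same eigenvalues counted with algebraic multiplicity.

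It then remains to read off the eigenvalues of $R\otimes S$. Writing $R\otimes S$ in its defining $n\times n$ block form with $(k,l)$ block $r_{kl}S$, the blocks strictly below the diagonal vanish because $R$ is upper triangular, while each diagonal block $r_{kk}S$ is itself upper triangular because $S$ is. Consequently $R\otimes S$ is upper triangular, and its diagonal entries are exactly the products $r_{kk}s_{ll}=\lambda_k\mu_l$. Since the spectrum of an upper triangular matrix is its diagonal, the $mn$ eigenvalues of $A\otimes B$ are precisely the $\lambda_k\mu_l$, and reading the diagonal top to bottom reproduces the block ordering $\lambda_1\mu_1,\dots,\lambda_1\mu_m,\lambda_2\mu_1,\dots$ stated in the conclusion.

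The only genuinely delicate point is bookkeeping rather than ideas: one must confirm that the Kronecker product of two upper triangular matrices is again upper triangular and that its diagonal is the product of the diagonals, which is exactly what fixes the ordering in the conclusion. As a sanity check, and an alternative route valid when $A$ and $B$ are diagonalizable, note the direct computation $(A\otimes B)(x\otimes y)=(Ax)\otimes(By)=\lambda\mu\,(x\otimes y)$ for eigenpairs $Ax=\lambda x$ and $By=\mu y$, which already exhibits each $\lambda\mu$ as an eigenvalue; the Schur argument is what upgrades this to the full multiset of $mn$ eigenvalues, with correct multiplicities, in the possibly non-diagonalizable case.
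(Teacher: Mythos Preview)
Your argument is correct: Schur triangularization plus the mixed-product rule shows $A\otimes B$ is unitarily similar to the upper triangular $R\otimes S$, whose diagonal entries are exactly the $\lambda_k\mu_l$ in the stated block order, and this handles multiplicities in the non-diagonalizable case as well.

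There is, however, nothing to compare against in the paper itself: Lemma~\ref{lemma6} is stated with a citation to \cite[p.\,141]{Laub:05} and is not proved in the text. The paper simply imports the result as a tool for the stability analysis. Your Schur-based proof is the standard one found in that reference, so in effect you have reproduced what the cited source does rather than diverged from the paper's own (nonexistent) argument.
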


\begin{theorem}\label{theorem2}
Let $\mathcal{A}_x$, $\mathcal{A}_y$ and  $\mathcal{A}_z$ be defined by (\ref{2.37}). Then
\begin{equation*}
  \begin{split}
   & ||(I-\mathcal{A}_\nu)^{-1}|| \leq 1, \\
   &  ||(I-\mathcal{A}_\nu)^{-1}(I+\mathcal{A}_\nu)|| \leq 1,
  \end{split}
\end{equation*}
where $\nu=x,y,z$.
\end{theorem}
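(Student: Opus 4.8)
The plan is to reduce the statement to a single elementary inequality for real matrices whose symmetric part is negative semi-definite, and then to recognize each $\mathcal{A}_\nu$ as a Kronecker product of exactly that type.

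First I would isolate the following routine fact: if $W$ is a real square matrix whose symmetric part $H_W=\frac{1}{2}(W+W^{\rm T})$ satisfies $x^{\rm T}H_Wx\le 0$ for all real $x$, then $I-W$ is invertible with $\|(I-W)^{-1}\|\le 1$ and $\|(I-W)^{-1}(I+W)\|\le 1$. Indeed, using $x^{\rm T}Wx=x^{\rm T}H_Wx$ and expanding squares,
\begin{equation*}
\|(I-W)x\|^2=\|x\|^2-2\,x^{\rm T}H_Wx+\|Wx\|^2\ge\|x\|^2 ,
\end{equation*}
which forces $I-W$ to be injective (hence invertible) and, after the substitution $x=(I-W)^{-1}y$, gives $\|(I-W)^{-1}y\|\le\|y\|$. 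Similarly $\|(I+W)x\|^2=\|x\|^2+2\,x^{\rm T}H_Wx+\|Wx\|^2\le\|(I-W)x\|^2$; since $I-W$ and $I+W$ are polynomials in $W$ they commute, so $(I-W)^{-1}(I+W)=(I+W)(I-W)^{-1}$, and writing any vector as $x=(I-W)v$ yields $\|(I-W)^{-1}(I+W)x\|=\|(I+W)v\|\le\|(I-W)v\|=\|x\|$. Thus both bounds follow once I know that the symmetric part of $\mathcal{A}_\nu$ is negative semi-definite.

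To verify that, I would use the structure from $(\ref{2.37})$: for each $\nu\in\{x,y,z\}$, $\mathcal{A}_\nu$ is a threefold Kronecker product in which two factors are the identity and the third is $M_\nu:=c_1^\nu A_\mu+c_2^\nu A_\mu^{\rm T}+c_3^\nu B$, where $\mu\in\{\alpha,\beta,\gamma\}$ is the fractional order attached to $\nu$ and $c_1^\nu,c_2^\nu,c_3^\nu\ge 0$ (all of $\tau$, the mesh widths, the $\Gamma$-values and the model coefficients are non-negative). The matrix $B$ of $(\ref{2.14})$ is skew-symmetric ($B^{\rm T}=-B$), so the symmetric part of $M_\nu$ is $(c_1^\nu+c_2^\nu)\cdot\frac{1}{2}(A_\mu+A_\mu^{\rm T})$, a non-negative multiple of the matrix proved negative definite in Theorem \ref{theorem1} (applied with $\mu$ in place of $\alpha$), hence negative semi-definite. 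By Lemma \ref{lemma5} the symmetric part of $\mathcal{A}_\nu$ is the Kronecker product of the two identities with this $H_{M_\nu}$, and by Lemma \ref{lemma6} its eigenvalues are exactly those of $H_{M_\nu}$; being real symmetric with non-positive eigenvalues, it is negative semi-definite. Applying the elementary fact with $W=\mathcal{A}_\nu$ then gives the theorem. (No lower bound on the coefficients is needed: if $d_1^\nu=d_2^\nu=0$ the symmetric part of $M_\nu$ is simply the zero matrix.)

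I expect the norm identities, the non-negativity of the $c_i^\nu$, and the skew-symmetry of $B$ to be immediate. The only point calling for care is the last step of the reduction — confirming that forming the symmetric part commutes with the Kronecker structure and that the result inherits precisely the eigenvalues of $H_{M_\nu}$, so that ``non-positive eigenvalues'' legitimately upgrades to ``negative semi-definite'' — which is exactly what Lemmas \ref{lemma5} and \ref{lemma6} are there to supply.
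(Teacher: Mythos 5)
Your proposal is correct and takes essentially the same route as the paper: you show the symmetric part of each $\mathcal{A}_\nu$ is negative (semi-)definite by exploiting the Kronecker structure of (\ref{2.37}), the skew-symmetry of $B$, Theorem \ref{theorem1} and Lemmas \ref{lemma5}--\ref{lemma6}, and then deduce both bounds from the expansions of $\|(I\mp\mathcal{A}_\nu)v\|^2$ together with the substitution $v\mapsto(I-\mathcal{A}_\nu)^{-1}v$, which is exactly the paper's quadratic-form argument. Your refinements (explicitly proving invertibility of $I-\mathcal{A}_\nu$, noting that semi-definiteness suffices so degenerate coefficients are covered, and justifying $(I-\mathcal{A}_\nu)^{-1}(I+\mathcal{A}_\nu)=(I+\mathcal{A}_\nu)(I-\mathcal{A}_\nu)^{-1}$ because both factors are polynomials in the same $\mathcal{A}_\nu$) are correct but do not change the method.
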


\begin{proof}
  From Lemma \ref{lemma5} and (\ref{2.37}), we obtain
\begin{equation}\label{4.2}
  \begin{split}
&\frac{\mathcal{A}_x+\mathcal{A}_x^T}{2}=\frac{ (d_1^x+d_2^x)\tau}{2\Gamma(4-\alpha)(\Delta x)^{\alpha}}
       I \otimes I \otimes \left( \frac{ A_{\alpha}+ A_{\alpha}^T}{2} \right);\\
&\frac{\mathcal{A}_y+\mathcal{A}_y^T}{2}=\frac{ (d_1^y+d_2^y)\tau}{2\Gamma(4-\beta)(\Delta y)^{\beta}}
       I \otimes \left( \frac{ A_{\beta}+ A_{\beta}^T}{2} \right) \otimes  I ;\\
&\frac{\mathcal{A}_z+\mathcal{A}_z^T}{2}=\frac{ (d_1^z+d_2^z)\tau}{2\Gamma(4-\gamma)(\Delta z)^{\gamma}}
       \left( \frac{ A_{\gamma}+ A_{\gamma}^T}{2} \right) \otimes I \otimes  I.
  \end{split}
\end{equation}

According to Theorem \ref{theorem1} and Lemma \ref{lemma6}, we know that $\mathcal{A}_\nu+\mathcal{A}_\nu^T$
 are negative definite and symmetric matrices, where $\nu=x,y,z$. Then for any $v=(v_1,v_2,\ldots,v_n)^T \in \mathbb{R}^n$, we have
 \begin{equation*}
   v^Tv \leq v^T(I-\mathcal{A}_\nu^T)(I-\mathcal{A}_\nu)v.
 \end{equation*}
Substituting $v$ and $v^T$ by $(I-\mathcal{A}_\nu)^{-1}v$  and $v^T(I-\mathcal{A}_\nu^T)^{-1}$, respectively, leads to
 \begin{equation*}
 v^T(I-\mathcal{A}_\nu^T)^{-1}(I-\mathcal{A}_\nu)^{-1}v \leq v^Tv.
 \end{equation*}
Then, there exists
\begin{equation*}
  ||(I-\mathcal{A}_\nu)^{-1}||
  =\sup_{v\neq 0}\sqrt{\frac{v^T(I-\mathcal{A}_\nu^T)^{-1}(I-\mathcal{A}_\nu)^{-1}v}{v^Tv}}\leq 1.
\end{equation*}
 Similarly, we have
 \begin{equation*}
v^T(I+\mathcal{A}_\nu^T)(I+\mathcal{A}_\nu)v \leq v^T(I-\mathcal{A}_\nu^T)(I-\mathcal{A}_\nu)v.
 \end{equation*}
 Taking $v$ by $ (I-\mathcal{A}_\nu)^{-1}v$, then the above equation can be rewritten as
  \begin{equation*}
v^T(I-\mathcal{A}_\nu^T)^{-1}(I+\mathcal{A}_\nu^T)(I+\mathcal{A}_\nu)(I-\mathcal{A}_\nu)^{-1}v \leq v^Tv.
 \end{equation*}
 From Lemma \ref{lemma4}, it is to check that $\mathcal{A}_x$, $\mathcal{A}_y$ and $\mathcal{A}_z$ commute,
 then it yields that
  \begin{equation*}
  \begin{split}
  &||(I-\mathcal{A}_\nu)^{-1}(I+\mathcal{A}_\nu)|| =||(I+\mathcal{A}_\nu)(I-\mathcal{A}_\nu)^{-1}||\\
  &\quad=\sup_{v\neq0}\sqrt{\frac{v^T(I-\mathcal{A}_\nu^T)^{-1}(I+\mathcal{A}_\nu^T)
              (I+\mathcal{A}_\nu)(I-\mathcal{A}_\nu)^{-1}v}{v^Tv}}\leq 1.
  \end{split}
\end{equation*}

\end{proof}

\subsection{Stability and Convergence for 1D }

\begin{theorem}\label{theorem3}
The difference scheme (\ref{2.11})  with $\alpha \in (1,2)$
is unconditionally stable.
\end{theorem}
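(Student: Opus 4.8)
The plan is to use the matrix form (\ref{2.13}) of the scheme (\ref{2.11}), which we write compactly as $(I-M)U^{n+1}=(I+M)U^{n}+\tau F^{n+1/2}$ with
$$M=\frac{\tau}{2}\left(\frac{d_1^x}{\Gamma(4-\alpha)(\Delta x)^{\alpha}}A_{\alpha}+\frac{d_2^x}{\Gamma(4-\alpha)(\Delta x)^{\alpha}}A_{\alpha}^{T}+\frac{\kappa_x}{2\Delta x}B\right),$$
and to show that the amplification matrix $(I-M)^{-1}(I+M)$ is a (non-strict) $2$-norm contraction for \emph{every} $\tau>0$ and $\Delta x>0$; that is precisely unconditional stability.

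The crucial point is to control the symmetric part $H_M:=\tfrac12(M+M^{T})$ of $M$. Since $B^{T}=-B$ by (\ref{2.14}), the advection block is skew-symmetric and contributes nothing to $H_M$, leaving
$$H_M=\frac{(d_1^x+d_2^x)\tau}{2\Gamma(4-\alpha)(\Delta x)^{\alpha}}\cdot\frac{A_{\alpha}+A_{\alpha}^{T}}{2}.$$
By Theorem \ref{theorem1} (and the Gershgorin estimate in its proof), all eigenvalues of $(A_{\alpha}+A_{\alpha}^{T})/2$ are negative, so $H_M$ is negative semidefinite for every admissible choice of the non-negative coefficients (negative definite unless $d_1^x=d_2^x=0$), with no constraint linking $\tau$ and $\Delta x$ — this is exactly why the result is unconditional.

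With $H_M$ negative semidefinite, the remainder is the standard Crank--Nicolson energy estimate. For any real vector $v$,
$$v^{T}(I-M^{T})(I-M)v=\|v\|^{2}-2\,v^{T}H_Mv+\|Mv\|^{2}\ge\|v\|^{2},$$
so $\|(I-M)v\|\ge\|v\|$; hence $I-M$ is invertible and $\|(I-M)^{-1}\|\le1$. Likewise $v^{T}\big[(I-M^{T})(I-M)-(I+M^{T})(I+M)\big]v=-4\,v^{T}H_Mv\ge0$, i.e. $\|(I+M)v\|\le\|(I-M)v\|$ for all $v$; setting $w=(I-M)v$ and using that $I\pm M$ commute gives $\|(I-M)^{-1}(I+M)\|=\|(I+M)(I-M)^{-1}\|\le1$. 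Finally, if $U^{n}$ and $\widetilde U^{n}$ solve (\ref{2.13}) with the same source but different data, the error $E^{n}=U^{n}-\widetilde U^{n}$ satisfies $E^{n+1}=(I-M)^{-1}(I+M)E^{n}$, whence $\|E^{n}\|\le\|E^{0}\|$ for all $n$, which is the asserted unconditional stability; a source perturbation $\delta F$ adds only $\tau\sum_k\|(I-M)^{-1}\|\,\|\delta F^{k+1/2}\|\le T\max_k\|\delta F^{k+1/2}\|$, by $\|(I-M)^{-1}\|\le1$.

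The only non-routine step is recognizing that $B$ is skew-symmetric so it drops out of $H_M$, together with keeping track of the degenerate pure-advection case $d_1^x=d_2^x=0$, where $H_M=0$, the amplification matrix is a unitary Cayley transform of norm exactly $1$, and the semidefinite inequality $v^TH_Mv\le0$ (rather than strict definiteness) is exactly what the energy argument needs. Everything else is routine quadratic-form manipulation using the negative definiteness supplied by Theorem \ref{theorem1}.
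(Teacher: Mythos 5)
Your proposal is correct, but it proves the theorem by a different route than the paper does. The paper's own proof is spectral: it forms the same matrix $M$ of (\ref{4.3}), notes $\tfrac12(M+M^{T})$ is a positive multiple of the negative definite matrix $H$ from Theorem \ref{theorem1} (the skew-symmetric $B$ dropping out, exactly as you observe), invokes Lemma \ref{lemma3} to get $\Re(\lambda(M))<0$, and then maps eigenvalues through the Cayley transform $\lambda\mapsto(1-\lambda)^{-1}(1+\lambda)$ to conclude that the spectral radius of $(I-M)^{-1}(I+M)$ is less than $1$. You instead run the energy/quadratic-form argument and obtain the operator-norm bounds $\|(I-M)^{-1}\|\le 1$ and $\|(I-M)^{-1}(I+M)\|\le 1$; this is precisely the technique the paper uses in Theorem \ref{theorem2} (and quotes again in the 1D convergence proof), just applied here to $M$. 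The comparison is worth noting: since $(I-M)^{-1}(I+M)$ is not normal, a spectral radius strictly below $1$ by itself controls only the asymptotic behaviour of its powers for a fixed mesh, whereas your norm bound gives the uniform estimate $\|E^{n}\|\le\|E^{0}\|$ with a constant independent of $\tau$ and $\Delta x$, which is the cleaner formulation of unconditional stability and also covers the degenerate case $d_1^x=d_2^x=0$ (where $H_M=0$ and the amplification matrix is orthogonal), a case excluded by the strict inequalities in the paper's eigenvalue argument. The paper's route is shorter given Lemma \ref{lemma3}, but yours is the more robust estimate and dovetails directly with the convergence proof that follows.
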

\begin{proof}
Let $\widetilde{u}_{i}^n~(i=1,2,\ldots,N_x-1;\,n=0,1,\ldots,N_t)$ be the approximate solution of $u_i^n$,
which is the exact solution of the difference scheme (\ref{2.11}).
Putting $\epsilon_i^n=\widetilde{u}_i^n-u_i^n$, and denoting $E^n=[\epsilon_1^n,\epsilon_2^n,\ldots, \epsilon_{N_x-1}^n]$,
 then from (\ref{2.11}) we obtain the following perturbation equation
 \begin{equation*}
  (I-M)E^{n+1}=(I+M)E^n,
\end{equation*}
where
\begin{equation}\label{4.3}
  M=  \frac{\tau }{2}\left(\frac{d_1^x}{\Gamma(4-\alpha)(\Delta x)^{\alpha}}A
   +\frac{ d_2^x}{\Gamma(4-\alpha)(\Delta x)^{\alpha}}A^T
                     +\frac{\kappa_x}{2\Delta x}B\right).
\end{equation}
Denoting $\lambda$ as an eigenvalue of the matrix $M$, and
using (\ref{4.3}), there exists
\begin{equation*}
\frac{M+M^T}{2}=\frac{\tau (d_1^x+d_2^x)}{2\Gamma(4-\alpha)(\Delta x)^{\alpha}}H,
\end{equation*}
 where $H$ is defined by (\ref{4.1}) and negative definite by the proof Theorem \ref{theorem1}, then from Lemma \ref{lemma3}, we get $\Re(\lambda(M))<0$.

Note that
 $\lambda$ is an eigenvalue of the matrix $M$ if and only if
$1-\lambda$ is an eigenvalue of the matrix $I-M$, if and only if $(1-\lambda)^{-1}(1+\lambda)$ is an
eigenvalue of the matrix $(I-M)^{-1}(I+M)$. Since $\Re(\lambda(M))<0 $, it implies that $|(1-\lambda)^{-1}(1+\lambda)|<1$.
Thus, the spectral radius of the matrix $(I-M)^{-1}(I+M)$ is less than $1$, hence the scheme (\ref{2.11})
is unconditionally stable.

\end{proof}

\begin{theorem}\label{theorem4}
Let $u(x_i,t_n)$ be the exact solution of (\ref{2.7})  with $\alpha \in (1,2)$, and  $u_i^n$ be the  solution of
the  finite difference scheme (\ref{2.11}),  then there is a positive constant C such that
\begin{equation*}
  \begin{split}
||u(x_i,t_n)-u_i^n|| \leq  C (\tau^2+(\Delta x)^2), \quad i=1,2,\ldots,N_x-1;\,n=0,1,\ldots,N_t.
  \end{split}
  \end{equation*}
\end{theorem}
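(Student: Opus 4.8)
The plan is to run the standard ``consistency $\times$ stability'' argument, recycling the matrix estimates already set up for Theorem \ref{theorem3}. Put $e_i^n=u(x_i,t_n)-u_i^n$ and $E^n=[e_1^n,e_2^n,\ldots,e_{N_x-1}^n]^{\rm T}$, so that $E^0=0$ by the initial condition \eqref{1.2}. Subtracting the scheme \eqref{2.11} from the exact identity \eqref{2.9} and writing $M$ as in \eqref{4.3}, one obtains the error recursion
\begin{equation*}
(I-M)E^{n+1}=(I+M)E^n+R^{n+1},
\end{equation*}
where $R^{n+1}=[R_1^{n+1},\ldots,R_{N_x-1}^{n+1}]^{\rm T}$ satisfies, by \eqref{2.10}, $\|R^{n+1}\|\leq \widetilde{c}\,\tau(\tau^2+(\Delta x)^2)$ (in the mesh-weighted norm; equivalently up to a factor depending only on the domain length). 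Hence $E^{n+1}=(I-M)^{-1}(I+M)E^n+(I-M)^{-1}R^{n+1}$.

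Next I would record the two operator-norm bounds $\|(I-M)^{-1}\|\leq 1$ and $\|(I-M)^{-1}(I+M)\|\leq 1$, which is exactly the computation carried out for $\mathcal{A}_\nu$ in the proof of Theorem \ref{theorem2}. Indeed, from \eqref{4.3} and Theorem \ref{theorem1} the symmetric part $\frac{M+M^{\rm T}}{2}=\frac{\tau(d_1^x+d_2^x)}{2\Gamma(4-\alpha)(\Delta x)^{\alpha}}H$ is negative (semi)definite, so for every vector $v$ one has $v^{\rm T}v\leq v^{\rm T}(I-M^{\rm T})(I-M)v$ and $v^{\rm T}(I+M^{\rm T})(I+M)v\leq v^{\rm T}(I-M^{\rm T})(I-M)v$; substituting $v\mapsto (I-M)^{-1}v$ in each inequality yields the two bounds. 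Taking norms in the error recursion and using them gives
\begin{equation*}
\|E^{n+1}\|\leq \|E^n\|+\|R^{n+1}\|\leq \|E^n\|+\widetilde{c}\,\tau(\tau^2+(\Delta x)^2),
\end{equation*}
so by induction from $E^0=0$ we get $\|E^n\|\leq n\,\widetilde{c}\,\tau(\tau^2+(\Delta x)^2)\leq \widetilde{c}\,T(\tau^2+(\Delta x)^2)$ since $n\tau\leq T$, and $C=\widetilde{c}\,T$ finishes the proof.

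The main obstacle here is not conceptual but a matter of which norm one measures the residual in: \eqref{2.10} is a componentwise bound on $R_i^{n+1}$, and passing to the Euclidean vector norm costs a factor $\sqrt{N_x-1}$. This is innocuous provided one works with the mesh-weighted $2$-norm $\|V\|_{\Delta x}=(\Delta x\sum_i V_i^2)^{1/2}$ (or, equivalently, allows $C$ to absorb the interval length), and the stability bounds derived above are unchanged in that norm. One should also note the degenerate case $d_1^x=d_2^x=0$, where $\frac{M+M^{\rm T}}{2}$ is merely negative semidefinite (it vanishes): the two inequalities above still hold with ``$\leq$'', so neither the norm bounds nor the convergence estimate is affected.
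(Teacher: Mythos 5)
Your proof is correct and follows essentially the same route as the paper's: the identical error recursion $(I-M)E^{n+1}=(I+M)E^{n}+R^{n+1}$ with $M$ from (\ref{4.3}), the same stability bounds $\|(I-M)^{-1}\|\leq 1$ and $\|(I-M)^{-1}(I+M)\|\leq 1$ obtained by repeating the computation of Theorem \ref{theorem2} using the negative definiteness of the symmetric part of $M$ (Theorem \ref{theorem1}), and the same summation of the local truncation errors starting from $E^{0}=0$. Your additional remarks---working in a mesh-weighted $2$-norm (or absorbing the interval length into $C$) to pass from the componentwise bound (\ref{2.10}) to a vector-norm bound, and noting that the degenerate case $d_1^x=d_2^x=0$ only yields negative semidefiniteness---tidy up details the paper leaves implicit but do not alter the argument.
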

\begin{proof}
Denoting $e_i^n=u(x_i,t_n)-u_i^n$, and    $e^n=[e_1^n,e_2^n,\ldots, e_{N_x-1}^n]^T$.
 Subtracting (\ref{2.9}) from (\ref{2.11}) and using $e^0=0$, we obtain
 \begin{equation*}
  (I-M)e^{n+1}=(I+M)e^n+R^{n+1},
\end{equation*}
where $M$ is defined by (\ref{4.3}), and $R^n=[R_1^n,R_2^n,\ldots, R_{N_x-1}^n]^T$. The above equation can be rewritten as
  \begin{equation*}
  e^{n+1}=(I-M)^{-1}(I+M)e^n+(I-M)^{-1}R^{n+1},
\end{equation*}
 and taking the 2-norm on both sides, similar to the proof of the Theorem \ref{theorem2}, we can show that $||(I-M)^{-1}(I+M)||\leq 1$.
 Then, using  $|R_i^{n+1}|\leq \widetilde{c} \tau(\tau^2+(\Delta x)^2)$ in (\ref{2.10}), we obtain
   \begin{equation*}
  ||e^{n}||\leq ||(I-M)^{-1}(I+M)e^{n-1}||+ ||R^{n}|| \leq  ||e^{n-1}||+||R^{n}||\leq\sum_{k=0}^{n-1}||R^{k+1}||\leq
  c (\tau^2+(\Delta x)^2).
\end{equation*}
\end{proof}

\subsection{Stability and Convergence for 2D }

\begin{theorem}\label{theorem3}
The difference scheme (\ref{2.21}) with $\alpha,\beta \in (1,2)$
is unconditionally stable.
\end{theorem}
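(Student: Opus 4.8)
The plan is to run the argument of Theorem~\ref{theorem2} for the two two-dimensional operators $\mathcal{B}_x$ and $\mathcal{B}_y$ of~(\ref{2.27}). First I would pass to the matrix form~(\ref{2.28}) of the scheme~(\ref{2.21}): if $\widetilde{\mathbf{U}}^n$ is the solution of~(\ref{2.28}) with a perturbed initial value and $E^n=\widetilde{\mathbf{U}}^n-\mathbf{U}^n$, then the forcing term cancels and the error obeys
\begin{equation*}
(I-\mathcal{B}_x)(I-\mathcal{B}_y)E^{n+1}=(I+\mathcal{B}_x)(I+\mathcal{B}_y)E^{n}.
\end{equation*}
Because $\mathcal{B}_x$ is a sum of Kronecker products of the form $I\otimes(\cdot)$ while $\mathcal{B}_y$ is a sum of products $(\cdot)\otimes I$, Lemma~\ref{lemma4} gives $\mathcal{B}_x\mathcal{B}_y=\mathcal{B}_y\mathcal{B}_x$, so $(I\pm\mathcal{B}_x)$, $(I\pm\mathcal{B}_y)$ and all of their inverses commute with one another.

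Next I would establish $\|(I-\mathcal{B}_\nu)^{-1}(I+\mathcal{B}_\nu)\|\le 1$ for $\nu=x,y$ exactly as in Theorem~\ref{theorem2}. Since the matrix $B$ of~(\ref{2.14}) satisfies $B+B^{T}=0$, Lemma~\ref{lemma5} yields
\begin{equation*}
\frac{\mathcal{B}_x+\mathcal{B}_x^{T}}{2}=\frac{(d_1^x+d_2^x)\tau}{2\Gamma(4-\alpha)(\Delta x)^{\alpha}}\;I\otimes\Bigl(\frac{A_\alpha+A_\alpha^{T}}{2}\Bigr),
\end{equation*}
and likewise for $\mathcal{B}_y$ with $A_\beta$; by Theorem~\ref{theorem1} the inner blocks are negative definite, so by Lemma~\ref{lemma6} the symmetric matrices $\mathcal{B}_\nu+\mathcal{B}_\nu^{T}$ are negative definite. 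The quadratic-form manipulation of Theorem~\ref{theorem2} — from $v^{T}(I+\mathcal{B}_\nu^{T})(I+\mathcal{B}_\nu)v\le v^{T}(I-\mathcal{B}_\nu^{T})(I-\mathcal{B}_\nu)v$, replacing $v$ by $(I-\mathcal{B}_\nu)^{-1}v$ — then gives the bound (and, in the same way, $\|(I-\mathcal{B}_\nu)^{-1}\|\le1$).

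Finally, using the commutativity from the first step I would split the amplification matrix,
\begin{equation*}
E^{n+1}=\bigl[(I-\mathcal{B}_x)^{-1}(I+\mathcal{B}_x)\bigr]\,\bigl[(I-\mathcal{B}_y)^{-1}(I+\mathcal{B}_y)\bigr]\,E^{n},
\end{equation*}
whence $\|E^{n+1}\|\le\|E^{n}\|\le\cdots\le\|E^{0}\|$ by submultiplicativity of the $2$-norm, i.e.\ the scheme is unconditionally stable. I expect the main (and essentially only) subtlety to be the commutativity step: it is precisely what lets the amplification operator be written as a product of two factors each bounded by $1$ in the $2$-norm, rather than having to estimate $(I-\mathcal{B}_y)^{-1}(I-\mathcal{B}_x)^{-1}(I+\mathcal{B}_x)(I+\mathcal{B}_y)$ as a single block; everything else is a direct transcription of Theorems~\ref{theorem1} and~\ref{theorem2}.
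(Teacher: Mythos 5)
Your proposal is correct, and it shares the paper's core ingredients: the commutativity $\mathcal{B}_x\mathcal{B}_y=\mathcal{B}_y\mathcal{B}_x$ obtained from Lemma~\ref{lemma4}, the computation of the symmetric parts via Lemma~\ref{lemma5} (with the antisymmetry of $B$ killing the advection contribution), and their negative definiteness via Theorem~\ref{theorem1} and Lemma~\ref{lemma6}. Where you diverge is in the concluding mechanism. The paper argues through eigenvalues: since $\Re(\lambda_x)<0$ and $\Re(\lambda_y)<0$, the eigenvalues $(1-\lambda_\nu)^{-1}(1+\lambda_\nu)$ of each amplification factor have modulus less than one, and hence $\left((I-\mathcal{B}_y)^{-1}(I+\mathcal{B}_y)\right)^n\left((I-\mathcal{B}_x)^{-1}(I+\mathcal{B}_x)\right)^n\to 0$; stability is inferred from this spectral-radius statement. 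You instead transcribe the quadratic-form argument of Theorem~\ref{theorem2} to the matrices $\mathcal{B}_\nu$ and conclude $\|(I-\mathcal{B}_\nu)^{-1}(I+\mathcal{B}_\nu)\|\le 1$ in the $2$-norm, then use the commutativity to factor the amplification matrix and get $\|E^{n+1}\|\le\|E^n\|$. Your route buys something real: since the amplification factors are not normal, a spectral radius below one only guarantees decay of powers for each fixed mesh, with a constant that is not a priori uniform in $\Delta x,\Delta y,\tau$, whereas your norm contraction gives the mesh-independent bound $\|E^n\|\le\|E^0\|$ directly. The paper itself effectively endorses this variant, since its 2D convergence proof invokes exactly these norm bounds "similar to the proof of Theorem~\ref{theorem2}"; so your argument is a tighter, self-contained version of what the paper does, at the cost of repeating the Theorem~\ref{theorem2} manipulation for the $\mathcal{B}_\nu$ matrices (one should also note, as is implicit in that manipulation, that $I-\mathcal{B}_\nu$ is invertible because $\Re(\lambda(\mathcal{B}_\nu))<0$).
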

\begin{proof}
Let $\widetilde{u}_{i,j}^n~(i=1,2,\ldots,N_x-1;j=1,2,\ldots,N_y-1;n=0,1,\ldots,N_t)$ be the approximate solution of ${u}_{i,j}^n$,
which is the exact solution of the  difference scheme (\ref{2.21}).
Taking $\epsilon_{i,j}^n=\widetilde{u}_{i,j}^n-{u}_{i,j}^n$,
 then from (\ref{2.21}) we obtain the following perturbation equation
 \begin{equation}\label{4.4}
  (I-\mathcal{B}_x )(I-\mathcal{B}_y )\mathbf{E}^{n+1}= (I+\mathcal{B}_x )(I+\mathcal{B}_y )\mathbf{E}^{n},
\end{equation}
where $\mathcal{B}_x$ and  $\mathcal{B}_y$ are given in (\ref{2.27}), and
\begin{equation*}
\begin{split}
&\mathbf{E}^{n}=[\epsilon_{1,1}^n,\epsilon_{2,1}^n,\ldots,\epsilon_{N_x-1,1}^n,
   \epsilon_{1,2}^n,\epsilon_{2,2}^n,\dots,\epsilon_{N_x-1,2}^n,\ldots,\epsilon_{1,N_y-1}^n,\epsilon_{2,N_y-1}^n,\ldots,\epsilon_{N_x-1,N_y-1}^n]^T,
\end{split}
\end{equation*}
and we  can write (\ref{4.4}) as the following form
 \begin{equation}\label{4.5}
  \mathbf{E}^{n+1}= (I-\mathcal{B}_y )^{-1}(I-\mathcal{B}_x )^{-1}(I+\mathcal{B}_x )(I+\mathcal{B}_y )\mathbf{E}^{n}.
\end{equation}
Using Lemma \ref{lemma4}, it is to check that $\mathcal{B}_x$ and $\mathcal{B}_y$ commute, i.e.,
\begin{equation}\label{4.6}
\begin{split}
\mathcal{B}_x\mathcal{B}_y=\mathcal{B}_y\mathcal{B}_x  =
&\left(\frac{ d_1^y\tau}{2\Gamma(4-\beta)(\Delta y)^{\beta}} A_{\beta}
               +\frac{  d_2^y\tau}{2\Gamma(4-\beta)(\Delta y)^{\beta}}  A_{\beta}^T  +\frac{\kappa_y\tau }{4\Delta y}  B \right)\\
&\quad \otimes \left(\frac{ d_1^x\tau}{2\Gamma(4-\alpha)(\Delta x)^{\alpha}} A_{\alpha}
               +\frac{  d_2^x\tau}{2\Gamma(4-\alpha)(\Delta x)^{\alpha}} A_{\alpha}^T +\frac{\kappa_x\tau }{4\Delta x} B\right).
\end{split}
 \end{equation}
 Then Eq. (\ref{4.5}) can be rewritten as
  \begin{equation*}
  \mathbf{E}^{n}= \left((I-\mathcal{B}_y )^{-1}(I+\mathcal{B}_y )\right)^n \left(((I-\mathcal{B}_x )^{-1}(I+\mathcal{B}_x )\right)^n\mathbf{E}^{0}.
\end{equation*}
From Lemma \ref{lemma5} and (\ref{2.27}), we obtain
\begin{equation*}
  \begin{split}
&\frac{\mathcal{B}_x+\mathcal{B}_x^T}{2}=\frac{ (d_1^x+d_2^x)\tau}{2\Gamma(4-\alpha)(\Delta x)^{\alpha}}
       I \otimes \left( \frac{ A_{\alpha}+ A_{\alpha}^T}{2} \right);\\
&\frac{\mathcal{B}_y+\mathcal{B}_y^T}{2}=\frac{ (d_1^y+d_2^y)\tau}{2\Gamma(4-\beta)(\Delta y)^{\beta}}
       \left( \frac{ A_{\beta}+ A_{\beta}^T}{2} \right) \otimes  I .\\
  \end{split}
\end{equation*}
 According to Theorem \ref{theorem1} and Lemma \ref{lemma2}, the eigenvalues of $\frac{A_{\alpha}+A_{\alpha}^T}{2}$ and $\frac{A_{\beta}+A_{\beta}^T}{2}$ are all negative
 when $\alpha, \beta \in (1,2)$. Let $\lambda_x$ and $\lambda_y$ be an eigenvalue of matrices $\mathcal{B}_x$ and $\mathcal{B}_y$, respectively.
From Lemma \ref{lemma6}, we get $\Re(\lambda_x)<0$ and $\Re(\lambda_y)<0$, then, the
eigenvalues of the matrices $(I-\mathcal{B}_x)^{-1}(I+\mathcal{B}_x)$ and $(I-\mathcal{B}_y)^{-1}(I+\mathcal{B}_y)$, $(1-\lambda_x)^{-1}(1+\lambda_x)$ and $(1-\lambda_y)^{-1}(1+\lambda_y)$ are less than one. 
And it follows that
$\left((I-\mathcal{B}_y )^{-1}(I+\mathcal{B}_y )\right)^n$ and $ \left(((I-\mathcal{B}_x )^{-1}(I+\mathcal{B}_x )\right)^n$
converge to zero matrix \cite[p.\,26]{Quarteroni:07}, as $n\rightarrow \infty$. Hence the difference scheme (\ref{2.19})  is unconditionally stable.

\end{proof}

\begin{theorem}\label{theorem4}
Let $u(x_i,y_j,t_n)$ be the exact solution of (\ref{2.15}) with $\alpha,\beta \in (1,2)$, and
 $u_{i,j}^n$ be the  solution of
the  finite difference scheme (\ref{2.21}),  then there is a positive constant C such that
\begin{equation*}
  \begin{split}
||u(x_i,y_j,t_n)-u_{i,j}^n|| \leq  C (\tau^2+(\Delta x)^2+(\Delta y)^2),
  \end{split}
  \end{equation*}
where
$i=1,2,\ldots,N_x-1;j=1,2,\ldots,N_y-1;\,n=0,1,\ldots,N_t.$
\end{theorem}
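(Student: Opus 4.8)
The plan is to mirror the one–dimensional convergence argument, feeding in the operator–norm bounds of Theorem \ref{theorem2} (whose proof applies verbatim with $\mathcal{A}_\nu$ replaced by the two–dimensional matrices $\mathcal{B}_x,\mathcal{B}_y$ of (\ref{2.27}), since by Theorem \ref{theorem1} the matrices $\mathcal{B}_\nu+\mathcal{B}_\nu^T$ are symmetric negative definite). Write $e_{i,j}^n=u(x_i,y_j,t_n)-u_{i,j}^n$ and let $\mathbf{e}^n$ be the vector assembled from the $e_{i,j}^n$ in the ordering used for $\mathbf{U}^n$, measured in the Euclidean norm so that (as in the one–dimensional case) a uniform pointwise bound on the entries transfers to $\|\mathbf{e}^n\|$, and similarly for the truncation error.

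First I would set up the error equation. The exact solution obeys the Crank--Nicolson relation (\ref{2.16}); expanding the factored operators of the perturbation scheme (\ref{2.21}) and inserting (\ref{2.16}) shows that the exact solution satisfies (\ref{2.21}) up to the defect
\[
\widetilde{R}_{i,j}^{n+1}=R_{i,j}^{n+1}+\frac{\tau^2}{4}\,\delta_{\alpha,x}\delta_{\beta,y}\big(u(x_i,y_j,t_{n+1})-u(x_i,y_j,t_n)\big),
\]
the second term being exactly the splitting term (\ref{2.22}) evaluated along the exact solution. Here $R_{i,j}^{n+1}$ is controlled by (\ref{2.17}), while the extra term is $\mathcal{O}(\tau^3)$ because $\delta_{\alpha,x}\delta_{\beta,y}$ applied to the (smooth) exact solution is a mesh–uniformly bounded approximation of a mixed fractional derivative and $u(\cdot,t_{n+1})-u(\cdot,t_n)=\mathcal{O}(\tau)$; hence $|\widetilde{R}_{i,j}^{n+1}|\le\widehat{c}\,\tau(\tau^2+(\Delta x)^2+(\Delta y)^2)$. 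Subtracting (\ref{2.21}) and using $\mathbf{e}^0=0$ gives
\[
(I-\mathcal{B}_x)(I-\mathcal{B}_y)\mathbf{e}^{n+1}=(I+\mathcal{B}_x)(I+\mathcal{B}_y)\mathbf{e}^{n}+\widetilde{\mathbf{R}}^{n+1}.
\]

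Next I would propagate the error in the $2$-norm. Since $\mathcal{B}_x$ and $\mathcal{B}_y$ commute by (\ref{4.6}), all factors above are rational functions of two commuting matrices, so
\[
\mathbf{e}^{n+1}=\big[(I-\mathcal{B}_x)^{-1}(I+\mathcal{B}_x)\big]\big[(I-\mathcal{B}_y)^{-1}(I+\mathcal{B}_y)\big]\mathbf{e}^{n}+(I-\mathcal{B}_y)^{-1}(I-\mathcal{B}_x)^{-1}\widetilde{\mathbf{R}}^{n+1}.
\]
By the argument of Theorem \ref{theorem2} each bracketed factor has $2$-norm at most $1$ and $\|(I-\mathcal{B}_\nu)^{-1}\|\le1$; regrouping the commuting product and applying these bounds yields $\|\mathbf{e}^{n+1}\|\le\|\mathbf{e}^{n}\|+\|\widetilde{\mathbf{R}}^{n+1}\|$. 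Iterating from $\mathbf{e}^0=0$ and summing $n\le N_t=T/\tau$ terms gives $\|\mathbf{e}^n\|\le\sum_{k=1}^{n}\|\widetilde{\mathbf{R}}^{k}\|\le n\,\widehat{c}\,\tau(\tau^2+(\Delta x)^2+(\Delta y)^2)\le C(\tau^2+(\Delta x)^2+(\Delta y)^2)$, the claimed estimate.

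The linear algebra here is routine once Theorems \ref{theorem1} and \ref{theorem2} are in hand; the only delicate point is the treatment of the splitting term, i.e.\ verifying that replacing the genuine Crank--Nicolson scheme (\ref{2.20}) by its factored perturbation (\ref{2.21}) costs only $\mathcal{O}(\tau^3)$ in the local error. That bound rests on $\delta_{\alpha,x}\delta_{\beta,y}u$ being bounded uniformly in $\Delta x,\Delta y$, hence on enough regularity of the exact solution — the same smoothness tacitly used for the truncation estimate (\ref{2.17}) — and the cleanest way to make it rigorous is to rewrite $\delta_{\alpha,x}\delta_{\beta,y}(u^{n+1}-u^n)$ as $\tau$ times a fixed mixed fractional derivative of $\partial_t u$ plus a controllable $\mathcal{O}(\tau^2+(\Delta x)^2+(\Delta y)^2)$ remainder.
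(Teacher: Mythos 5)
Your proposal is correct and takes essentially the same route as the paper's proof: subtract the Crank--Nicolson relation (\ref{2.16}) satisfied by the exact solution from the factored scheme (\ref{2.21}), use the commutativity of $\mathcal{B}_x$ and $\mathcal{B}_y$ together with the Theorem \ref{theorem2}-type bounds $\|(I-\mathcal{B}_\nu)^{-1}(I+\mathcal{B}_\nu)\|\leq 1$ and $\|(I-\mathcal{B}_\nu)^{-1}\|\leq 1$ to get $\|\mathbf{e}^{n+1}\|\leq\|\mathbf{e}^{n}\|+\|\mathbf{R}^{n+1}\|$, and telescope over at most $N_t=T/\tau$ steps. If anything, you are slightly more careful than the paper, which silently absorbs the $\mathcal{O}(\tau^3)$ splitting defect $\mathcal{B}_x\mathcal{B}_y\bigl(u(\cdot,t_{n+1})-u(\cdot,t_n)\bigr)$ into the truncation term when subtracting (\ref{2.16}) from (\ref{2.21}), whereas you write it out and bound it explicitly.
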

\begin{proof}
Taking  $e_{i,j}^n=u(x_i,y_j,t_n)-u_{i,j}^n$, and
 subtracting (\ref{2.16}) from (\ref{2.21}),  we obtain
 \begin{equation}\label{4.7}
  (I-\mathcal{B}_x )(I-\mathcal{B}_y )\mathbf{e}^{n+1}= (I+\mathcal{B}_x )(I+\mathcal{B}_y )\mathbf{e}^{n}+\mathbf{R}^{n+1},
\end{equation}
where $\mathcal{B}_x$ and  $\mathcal{B}_y$ are given in (\ref{2.27}), and
\begin{equation*}
\begin{split}
\mathbf{e}^{n}&=[e_{1,1}^n,e_{2,1}^n,\ldots,e_{N_x-1,1}^n,
   e_{1,2}^n,e_{2,2}^n,\dots,e_{N_x-1,2}^n,\ldots,e_{1,N_y-1}^n,e_{2,N_y-1}^n,\ldots,e_{N_x-1,N_y-1}^n]^T,\\
\mathbf{R}^{n}&=[R_{1,1}^n,R_{2,1}^n,\ldots,R_{N_x-1,1}^n,
   R_{1,2}^n,R_{2,2}^n,\dots,R_{N_x-1,2}^n,\ldots,R_{1,N_y-1}^n,R_{2,N_y-1}^n,\ldots,R_{N_x-1,N_y-1}^n]^T,\\
\end{split}
\end{equation*}
and  $|R_{i,j}^{n+1}|\leq \widetilde{c} \tau(\tau^2+(\Delta x)^2+(\Delta y)^2)$ is given in (\ref{2.17}).

 Since $\mathcal{B}_x$ and $\mathcal{B}_y$ commutes in (\ref{4.6}), then Eq. (\ref{4.7}) can be rewritten as
 \begin{equation*}
  \mathbf{e}^{n+1}= (I-\mathcal{B}_x )^{-1}(I+\mathcal{B}_x )(I-\mathcal{B}_y )^{-1}(I+\mathcal{B}_y )\mathbf{e}^{n}
  +(I-\mathcal{B}_y )^{-1}(I-\mathcal{B}_x )^{-1}\mathbf{R}^{n+1},
\end{equation*}
 and taking the 2-norm on both sides, similar to the proof of Theorem \ref{theorem2}, it can be proven that $ \|(I-\mathcal{B}_x )^{-1}(I+\mathcal{B}_x )(I-\mathcal{B}_y )^{-1}(I+\mathcal{B}_y )\| \leq \|(I-\mathcal{B}_x )^{-1}(I+\mathcal{B}_x )\|\cdot \|(I-\mathcal{B}_y )^{-1}(I+\mathcal{B}_y )\|\leq1 $  and
 $||(I-\mathcal{B}_y )^{-1}(I-\mathcal{B}_x )^{-1}|| \leq ||(I-\mathcal{B}_y )^{-1}||\cdot||(I-\mathcal{B}_x )^{-1}|| \leq 1 $.
 Then we get
   \begin{equation*}
  || \mathbf{e}^{n}||\leq \sum_{k=0}^{n-1}||\mathbf{R}^{k+1}||\leq
  c (\tau^2+(\Delta x)^2+(\Delta y)^2).
\end{equation*}
\end{proof}

By almost the same proof to the theorems of 2D, we can prove the following results for 3D.
\begin{theorem}
The difference scheme (\ref{2.33}) of the fractional convection diffusion equation (\ref{1.1}) with $\alpha,\beta, \gamma \in (1,2)$
is unconditionally stable.
\end{theorem}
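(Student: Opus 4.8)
The plan is to reproduce, tensor slot by tensor slot, the argument used for the two‑dimensional scheme (\ref{2.21}) (see (\ref{4.4})--(\ref{4.6})), with the pair $\mathcal{B}_x,\mathcal{B}_y$ replaced by the triple $\mathcal{A}_x,\mathcal{A}_y,\mathcal{A}_z$ of (\ref{2.37}). Let $\widetilde{u}_{i,j,m}^{n}$ be the solution of (\ref{2.33}) (equivalently (\ref{2.38})) subject to a perturbed initial datum, set $\epsilon_{i,j,m}^{n}=\widetilde{u}_{i,j,m}^{n}-u_{i,j,m}^{n}$, and let $\mathbf{E}^{n}$ be the vector obtained from the $\epsilon_{i,j,m}^{n}$ by the same lexicographic ordering used for $\mathbf{\widetilde{U}}^{n}$. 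Subtracting the two copies of (\ref{2.38}) removes the forcing term and yields
\begin{equation*}
(I-\mathcal{A}_x)(I-\mathcal{A}_y)(I-\mathcal{A}_z)\mathbf{E}^{n+1}=(I+\mathcal{A}_x)(I+\mathcal{A}_y)(I+\mathcal{A}_z)\mathbf{E}^{n}.
\end{equation*}

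The first step is the structural fact that $\mathcal{A}_x$, $\mathcal{A}_y$ and $\mathcal{A}_z$ pairwise commute. By (\ref{2.37}) each $\mathcal{A}_\nu$ is a linear combination of Kronecker products that act nontrivially on a single tensor factor, of the shapes $I\otimes I\otimes(\,\cdot\,)$, $I\otimes(\,\cdot\,)\otimes I$ and $(\,\cdot\,)\otimes I\otimes I$; by the mixed‑product rule (Lemma~\ref{lemma4}), operators living in different factors commute, e.g. $(I\otimes I\otimes P)(I\otimes Q\otimes I)=I\otimes Q\otimes P=(I\otimes Q\otimes I)(I\otimes I\otimes P)$. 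Consequently all the matrices $I\pm\mathcal{A}_\nu$ and their inverses commute with one another, and the recursion can be rewritten as
\begin{equation*}
\mathbf{E}^{n}=\bigl((I-\mathcal{A}_x)^{-1}(I+\mathcal{A}_x)\bigr)^{n}\bigl((I-\mathcal{A}_y)^{-1}(I+\mathcal{A}_y)\bigr)^{n}\bigl((I-\mathcal{A}_z)^{-1}(I+\mathcal{A}_z)\bigr)^{n}\mathbf{E}^{0}.
\end{equation*}

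Next I would bound the spectral radius of each factor. From (\ref{4.2}) the symmetric part of $\mathcal{A}_\nu$ is a nonnegative multiple of $I\otimes I\otimes\frac{A_\alpha+A_\alpha^{T}}{2}$ (and of its cyclic permutations for $\nu=y,z$), which is negative definite by Theorem~\ref{theorem1} together with Lemma~\ref{lemma6}; hence by Lemma~\ref{lemma3} every eigenvalue $\lambda_\nu$ of $\mathcal{A}_\nu$ satisfies $\Re(\lambda_\nu)<0$, so $|(1-\lambda_\nu)^{-1}(1+\lambda_\nu)|<1$ and $\rho\bigl((I-\mathcal{A}_\nu)^{-1}(I+\mathcal{A}_\nu)\bigr)<1$ for $\nu=x,y,z$. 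Thus each power $\bigl((I-\mathcal{A}_\nu)^{-1}(I+\mathcal{A}_\nu)\bigr)^{n}$ converges to the zero matrix as $n\to\infty$ \cite[p.\,26]{Quarteroni:07}, and the commuting factorization above forces $\mathbf{E}^{n}\to 0$, which is exactly unconditional stability of (\ref{2.33}). Alternatively, exactly as in the proof of Theorem~\ref{theorem2}, $\|(I-\mathcal{A}_\nu)^{-1}(I+\mathcal{A}_\nu)\|\le 1$ for each $\nu$, so the one‑step iteration matrix has 2-norm $\le 1$ and the scheme is power bounded.

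The only genuinely new ingredient compared with the 2D case is the commutativity of the three operators; once that is in hand, the remainder is the verbatim 2D argument carried out in each tensor factor. I expect the main obstacle, although still essentially routine, to be the Kronecker‑product bookkeeping that makes the factorization of the iteration matrix and the identification of the symmetric parts (\ref{4.2}) transparent.
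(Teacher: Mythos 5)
Your proposal is correct and matches the paper's intent exactly: the paper proves the 3D theorem ``by almost the same proof to the theorems of 2D,'' i.e.\ the perturbation equation, pairwise commutativity of $\mathcal{A}_x,\mathcal{A}_y,\mathcal{A}_z$ from their Kronecker structure, and the spectral/negative-definiteness argument of Theorem \ref{theorem1} and Lemmas \ref{lemma3} and \ref{lemma6}, which is precisely what you carry out. Your closing remark that one may instead invoke the norm bounds $\|(I-\mathcal{A}_\nu)^{-1}(I+\mathcal{A}_\nu)\|\le 1$ of Theorem \ref{theorem2} is also consistent with the paper's machinery and, if anything, slightly tightens the conclusion.
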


\begin{theorem}
Let $u(x_i,y_j,z_m,t_n)$ be the exact solution of (\ref{1.1}) with $\alpha,\beta, \gamma \in (1,2)$,
and  $u_{i,j,m}^n$ be the  solution of
the  finite difference scheme (\ref{2.33}),  then there is a positive constant C such that
\begin{equation*}
  \begin{split}
||u(x_i,y_j,z_m,t_n)-u_{i,j,m}^n|| \leq  C (\tau^2+(\Delta x)^2+(\Delta y)^2+(\Delta z)^2),
  \end{split}
  \end{equation*}
where
$i=1,2,\ldots,N_x-1;j=1,2,\ldots,N_y-1;m=1,2,\ldots,N_z-1;n=0,1,\ldots,N_t.$
\end{theorem}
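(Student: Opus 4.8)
The plan is to carry over the proof of the two-dimensional convergence theorem almost verbatim, the only genuinely new features being the third spatial direction and the extra triple-product term in the perturbation of (\ref{2.33}). First I would set $e_{i,j,m}^n=u(x_i,y_j,z_m,t_n)-u_{i,j,m}^n$ and collect these into the vector $\mathbf{e}^n$, ordered exactly as $\mathbf{\widetilde{U}}^n$. The exact solution obeys the unfactored Crank--Nicolson identity (\ref{2.29}) with remainder $R_{i,j,m}^{n+1}$ controlled by (\ref{2.31}); passing from (\ref{2.29}) to the factored form (\ref{2.33}) adds the perturbation term displayed right after (\ref{2.33}), namely
\[
\frac{\tau^2}{4}(\delta_{\alpha,x}\delta_{\beta,y}+\delta_{\alpha,x}\delta_{\gamma,z}+\delta_{\beta,y}\delta_{\gamma,z})(u^{n+1}-u^n)-\frac{\tau^3}{8}\delta_{\alpha,x}\delta_{\beta,y}\delta_{\gamma,z}(u^{n+1}-u^n),
\]
evaluated on the exact solution. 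For a sufficiently smooth $u$ one has $u^{n+1}-u^n=\mathcal{O}(\tau)$, while each composition $\delta_{\alpha,x}\delta_{\beta,y}$, $\delta_{\alpha,x}\delta_{\gamma,z}$, $\delta_{\beta,y}\delta_{\gamma,z}$, $\delta_{\alpha,x}\delta_{\beta,y}\delta_{\gamma,z}$ of the operators in (\ref{2.30}) acts on the exact solution as an $\mathcal{O}(1)$ operator; hence the whole perturbation is $\mathcal{O}(\tau^3)$ and may be absorbed into a modified remainder $\widetilde{R}_{i,j,m}^{n+1}$ still satisfying $\|\mathbf{\widetilde{R}}^{n+1}\|\le\widetilde{c}\,\tau(\tau^2+(\Delta x)^2+(\Delta y)^2+(\Delta z)^2)$.

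Subtracting (\ref{2.33}) from this modified identity and using $\mathbf{e}^0=0$ yields the error recursion
\[
(I-\mathcal{A}_x)(I-\mathcal{A}_y)(I-\mathcal{A}_z)\mathbf{e}^{n+1}=(I+\mathcal{A}_x)(I+\mathcal{A}_y)(I+\mathcal{A}_z)\mathbf{e}^n+\mathbf{\widetilde{R}}^{n+1}.
\]
As in the preceding (three-dimensional stability) theorem, Lemma \ref{lemma4} shows that $\mathcal{A}_x$, $\mathcal{A}_y$, $\mathcal{A}_z$ commute pairwise, being Kronecker products supported on disjoint tensor factors, so all the matrices $I\pm\mathcal{A}_\nu$ and their inverses commute; inverting the left factor and reordering gives
\[
\mathbf{e}^{n+1}=\Big[\prod_{\nu=x,y,z}(I-\mathcal{A}_\nu)^{-1}(I+\mathcal{A}_\nu)\Big]\mathbf{e}^n+(I-\mathcal{A}_z)^{-1}(I-\mathcal{A}_y)^{-1}(I-\mathcal{A}_x)^{-1}\mathbf{\widetilde{R}}^{n+1}.
\]
By Theorem \ref{theorem2}, $\|(I-\mathcal{A}_\nu)^{-1}\|\le1$ and $\|(I-\mathcal{A}_\nu)^{-1}(I+\mathcal{A}_\nu)\|\le1$ for each $\nu$, so submultiplicativity of the matrix $2$-norm gives $\|\mathbf{e}^{n+1}\|\le\|\mathbf{e}^n\|+\|\mathbf{\widetilde{R}}^{n+1}\|$. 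Iterating from $\mathbf{e}^0=0$ and using $N_t\tau=T$,
\[
\|\mathbf{e}^n\|\le\sum_{k=1}^{n}\|\mathbf{\widetilde{R}}^{k}\|\le T\widetilde{c}\,(\tau^2+(\Delta x)^2+(\Delta y)^2+(\Delta z)^2),
\]
which is the asserted estimate with $C=T\widetilde{c}$.

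The main obstacle is the treatment of the perturbation term: one must be certain that the newly appearing triple-product operator $\delta_{\alpha,x}\delta_{\beta,y}\delta_{\gamma,z}$, multiplied by $\tau^3$, is genuinely $\mathcal{O}(\tau^3)$ on the exact solution, which requires enough regularity of $u$ for the iterated second-order difference operators to be bounded uniformly in the mesh; and that the reordering producing the clean product $\prod_{\nu}(I-\mathcal{A}_\nu)^{-1}(I+\mathcal{A}_\nu)$ is legitimate, since $\|I+\mathcal{A}_\nu\|$ by itself need not be $\le1$ and a naive application of submultiplicativity would fail without the commutativity furnished by Lemma \ref{lemma4}. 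Once these two points are settled, the argument is a verbatim three-direction copy of the two-dimensional proof.
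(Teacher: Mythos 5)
Your proposal is correct and follows essentially the same route as the paper, which omits the explicit 3D proof and appeals to the 2D argument: form the error recursion between the factored scheme (\ref{2.33}) and the exact solution, use the commutativity of $\mathcal{A}_x,\mathcal{A}_y,\mathcal{A}_z$ from Lemma \ref{lemma4} together with the bounds $\|(I-\mathcal{A}_\nu)^{-1}\|\leq 1$ and $\|(I-\mathcal{A}_\nu)^{-1}(I+\mathcal{A}_\nu)\|\leq 1$ of Theorem \ref{theorem2}, and sum the truncation errors from $\mathbf{e}^0=0$. Your explicit absorption of the $\mathcal{O}(\tau^3)$ splitting term into the remainder is a point the paper treats only implicitly, so your write-up is, if anything, slightly more careful than the original.
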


\subsection{Stability and Convergence of ($1.1^\prime$) by using D-ADI-II and FS-II }
To prove the stability and convergence of  D-ADI-II and FS-II for ($1.1^\prime$), we need the following two lemmas.
\begin{lemma}\cite[p.\,396]{Golub:96}\label{lemma7}
If $P$ and $P+Q$ are m-by-m symmetric matrices, then
\begin{equation*}
  \lambda_k(P)+ \lambda_m(Q) \leq \lambda_k(P+Q) \leq \lambda_k(P) + \lambda_1(Q), \quad k=1,2,\ldots,m,
\end{equation*}
with  eigenvalues $\lambda_1(\cdot)\geq \lambda_2(\cdot)\geq \cdots \geq\lambda_m(\cdot)$.
\end{lemma}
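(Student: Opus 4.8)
The plan is to deduce both inequalities from the Courant--Fischer min--max characterization of the eigenvalues of a real symmetric matrix; the statement is the classical Weyl monotonicity/perturbation inequality, so no new machinery is required. First I would record the two ingredients. The elementary one is the Rayleigh-quotient bound $\lambda_m(A)\,x^{T}x\leq x^{T}Ax\leq\lambda_1(A)\,x^{T}x$, valid for every $x$ and every symmetric $A$, which follows at once from $A=U\Lambda U^{T}$ with $U$ orthogonal and $\Lambda$ the diagonal matrix of eigenvalues. The second is the identity
\begin{equation*}
  \lambda_k(A)=\max_{\dim S=k}\ \min_{0\neq x\in S}\frac{x^{T}Ax}{x^{T}x},\qquad k=1,\dots,m,
\end{equation*}
the maximum running over all $k$-dimensional subspaces $S\subseteq\mathbb{R}^{m}$.

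For the upper bound I would argue as follows. For every $x\neq0$,
\begin{equation*}
  \frac{x^{T}(P+Q)x}{x^{T}x}=\frac{x^{T}Px}{x^{T}x}+\frac{x^{T}Qx}{x^{T}x}\leq\frac{x^{T}Px}{x^{T}x}+\lambda_1(Q).
\end{equation*}
Fixing an arbitrary $k$-dimensional subspace $S$, taking the minimum over $0\neq x\in S$ on both sides, and pulling out the constant $\lambda_1(Q)$, gives
\begin{equation*}
  \min_{0\neq x\in S}\frac{x^{T}(P+Q)x}{x^{T}x}\leq\lambda_1(Q)+\min_{0\neq x\in S}\frac{x^{T}Px}{x^{T}x}.
\end{equation*}
Now take the maximum over all $k$-dimensional $S$ and apply the min--max identity to $P+Q$ on the left and to $P$ on the right; this yields $\lambda_k(P+Q)\leq\lambda_k(P)+\lambda_1(Q)$. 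The lower bound follows symmetrically from the reverse pointwise estimate $x^{T}(P+Q)x/x^{T}x\geq x^{T}Px/x^{T}x+\lambda_m(Q)$, or, more economically, by applying the upper bound already established to the pair $(P+Q,-Q)$ and using $\lambda_1(-Q)=-\lambda_m(Q)$; either route gives $\lambda_k(P+Q)\geq\lambda_k(P)+\lambda_m(Q)$. Ranging over $k=1,\dots,m$ is the assertion.

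The arithmetic here is entirely routine, so the only place that really deserves attention is the proof of the min--max identity itself. Its crux is a dimension count: a $k$-dimensional subspace $S$ and the $(m-k+1)$-dimensional span $T$ of eigenvectors associated with $\lambda_k(A),\lambda_{k+1}(A),\dots,\lambda_m(A)$ satisfy $\dim S+\dim T=m+1>m$, so they share a nonzero vector $x$; on that vector $x^{T}Ax/x^{T}x\leq\lambda_k(A)$, which gives the ``$\leq$'' half, while testing with the span of the top $k$ eigenvectors gives the reverse. I expect this dimension-counting step, rather than anything in the perturbation argument, to be the main (and essentially only) point needing care; everything else reduces to the monotonicity of $\min$ and $\max$ under the pointwise bounds above.
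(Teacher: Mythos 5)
Your argument is correct: it is the standard proof of Weyl's perturbation inequality via the Courant--Fischer min--max characterization, and the only implicit point worth noting is that $Q=(P+Q)-P$ is itself symmetric, so $\lambda_1(Q)$ and $\lambda_m(Q)$ and the Rayleigh-quotient bounds for $Q$ make sense. The paper does not prove this lemma at all but simply cites Golub and Van Loan, where the proof given is essentially the same min--max argument you propose, so your route matches the cited source.
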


\begin{lemma}\cite[p.\,84]{Griffiths:10}\label{lemma8}
Let the quadratic equation be $\lambda^2-b\lambda+c=0$, where $b$ and $c$ are both real parameters, then all roots satisfy
$|\lambda| <1$ if and only if $|b|< 1+c < 2$.
\end{lemma}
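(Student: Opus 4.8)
The plan is to recognize this as the classical Schur--Cohn (Jury) root-location criterion for a real quadratic and to prove both implications directly, by evaluating $p(\lambda):=\lambda^2-b\lambda+c$ at the endpoints $\lambda=\pm1$ together with Vieta's formulas $\lambda_1+\lambda_2=b$, $\lambda_1\lambda_2=c$, distinguishing the case of two real roots from that of a complex-conjugate pair.

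For the necessity direction I would assume $|\lambda_1|<1$ and $|\lambda_2|<1$. From $\lambda_1\lambda_2=c$ one gets $|c|<1$, hence $1+c>0$ and $1+c<2$. Writing $p(1)=(1-\lambda_1)(1-\lambda_2)$ and $p(-1)=(1+\lambda_1)(1+\lambda_2)$, each of these is strictly positive both in the real case (a product of two positive reals, since then $\lambda_1,\lambda_2\in(-1,1)$) and in the complex case (where it equals $|1\mp\lambda_1|^2>0$); since $p(1)=1-b+c$ and $p(-1)=1+b+c$, this gives $-(1+c)<b<1+c$, i.e.\ $|b|<1+c$, and therefore $|b|<1+c<2$.

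For the sufficiency direction I would assume $|b|<1+c<2$, so that $-1<c<1$ and $|b/2|<1$. If the discriminant $b^2-4c<0$ the roots form a conjugate pair with $|\lambda|^2=\lambda_1\lambda_2=c\in(0,1)$, so $|\lambda|<1$. If $b^2-4c\geq0$ the roots are real and $p$ is an upward parabola with $p(1)=1-b+c>0$ and $p(-1)=1+b+c>0$; were a root to lie in $[1,\infty)$, then $p(1)>0$ would force the other root also into $(1,\infty)$ and hence the vertex $b/2>1$, contradicting $|b/2|<1$, and symmetrically no root lies in $(-\infty,-1]$, so both roots lie in $(-1,1)$. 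This exhausts all cases.

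I expect the whole argument to be elementary, the only mildly delicate point being to keep the endpoint evaluation uniform --- so that $p(\pm1)>0$ is legitimately deduced from $|\lambda_i|<1$ even for complex roots --- and to observe that in the complex case $|b|<1+c$ is not an additional hypothesis but a consequence of $b^2<4c\leq(1+c)^2$. As an alternative I would keep in mind the bilinear substitution $\lambda=(z+1)/(z-1)$, which converts $p(\lambda)=0$ into the real quadratic $(1-b+c)z^2+(2-2c)z+(1+b+c)=0$ and the requirement $|\lambda|<1$ into $\Re(z)<0$, so that the Routh--Hurwitz rule (all three coefficients of one sign) delivers $|b|<1+c<2$ at once; I would present whichever version writes out more cleanly.
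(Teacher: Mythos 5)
Your proposal is correct, and there is nothing in the paper to compare it against: the paper states Lemma 4.8 purely as a cited result from Griffiths and Higham (p.~84) and gives no proof of its own, so your argument serves as a self-contained justification rather than an alternative to an internal one. Both directions check out: for necessity, the factorizations $p(1)=(1-\lambda_1)(1-\lambda_2)$ and $p(-1)=(1+\lambda_1)(1+\lambda_2)$ together with $|c|=|\lambda_1\lambda_2|<1$ give $|b|<1+c<2$ uniformly in the real and complex cases; for sufficiency, the complex case uses $|\lambda|^2=c\in(0,1)$ (note $b^2<4c$ forces $c>0$, as you observe), and the real case correctly combines $p(\pm1)>0$ with the vertex bound $|b/2|<1$ to trap both roots in $(-1,1)$. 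This is exactly the standard Schur--Cohn/Jury criterion argument, and your bilinear-transform alternative (reducing to Routh--Hurwitz for $(1-b+c)z^2+(2-2c)z+(1+b+c)=0$) is also valid, modulo the trivial remark that $\lambda=1$ cannot be a root when $p(1)>0$, so the substitution is nondegenerate. Either write-up would be an acceptable proof of the lemma the paper takes on faith.
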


\begin{theorem}\label{theorem7}
The difference scheme (\ref{3.7}) corresponding to two-dimensional case of  ($1.1^\prime$) with $\alpha,\beta \in (1,2)$
is unconditionally stable.
\end{theorem}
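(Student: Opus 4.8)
The plan is to turn the three–time–level scheme (\ref{3.7}) into a finite family of scalar second–order recurrences and then apply Lemma \ref{lemma8}. First I would write down the error equation. Setting $\epsilon_{i,j}^n=\widetilde u_{i,j}^n-u_{i,j}^n$ and collecting the grid values into a vector $\mathbf{E}^n$ as in the proof of Theorem \ref{theorem3}, the homogeneous form of (\ref{3.7}) (equivalently, the perturbation equation displayed just before it, with $f$ removed) reads
\[
(I-\mathcal{B}_x)(I-\mathcal{B}_y)\mathbf{E}^{n+1}=(I+\mathcal{B}_x)(I+\mathcal{B}_y)\mathbf{E}^{n}+\mathcal{B}_x\mathcal{B}_y\bigl(\mathbf{E}^{n}-\mathbf{E}^{n-1}\bigr),
\]
where, because $(1.1^\prime)$ forces $d_1^x=d_2^x$, $d_1^y=d_2^y$ and $\kappa_x=\kappa_y=0$, the matrices $\mathcal{B}_x=\frac{d_1^x\tau}{2\Gamma(4-\alpha)(\Delta x)^\alpha}\,I\otimes(A_\alpha+A_\alpha^T)$ and $\mathcal{B}_y=\frac{d_1^y\tau}{2\Gamma(4-\beta)(\Delta y)^\beta}\,(A_\beta+A_\beta^T)\otimes I$ are now \emph{symmetric}.

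Second, I would exploit the structure of $\mathcal{B}_x$ and $\mathcal{B}_y$: by Lemma \ref{lemma4} they commute, they are symmetric, and by Theorem \ref{theorem1} together with Lemma \ref{lemma6} all their eigenvalues are strictly negative. Hence $\mathcal{B}_x$, $\mathcal{B}_y$ (and therefore the symmetric product $\mathcal{B}_x\mathcal{B}_y$) share a common orthonormal eigenbasis, and projecting $\mathbf{E}^n$ onto it decouples the system into scalar recurrences
\[
(1-\mu)(1-\nu)\,\eta^{n+1}=\bigl[(1+\mu)(1+\nu)+\mu\nu\bigr]\eta^{n}-\mu\nu\,\eta^{n-1},\qquad \mu<0,\ \nu<0 .
\]
Since $1-\mu>1>0$ and $1-\nu>1>0$, the leading coefficient $(1-\mu)(1-\nu)$ is positive, so dividing through yields a quadratic $\lambda^2-b\lambda+c=0$ with $b=\frac{(1+\mu)(1+\nu)+\mu\nu}{(1-\mu)(1-\nu)}$ and $c=\frac{\mu\nu}{(1-\mu)(1-\nu)}$.

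The crux is then to verify the hypothesis $|b|<1+c<2$ of Lemma \ref{lemma8}. A short computation gives $1-c=\frac{1-\mu-\nu}{(1-\mu)(1-\nu)}$, $\ 1+c-b=\frac{-2(\mu+\nu)}{(1-\mu)(1-\nu)}$ and $\ 1+c+b=\frac{2+4\mu\nu}{(1-\mu)(1-\nu)}$; since $\mu<0$, $\nu<0$ one has $\mu+\nu<0<1$ and $\mu\nu>0>-1/2$, so all three quantities are positive, i.e.\ $c<1$, $b<1+c$ and $b>-(1+c)$. Hence $|b|<1+c<2$ for every choice of mesh, so both roots of each scalar quadratic lie strictly inside the unit disk.

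Finally I would assemble these mode-by-mode bounds: since there are only finitely many modes, and the starting data $\mathbf{E}^0$, $\mathbf{E}^1$ are controlled ($\mathbf{E}^1$ is produced by one D-ADI step, which is stable by Theorem \ref{theorem3}), the boundedness $|\lambda|<1$ of every companion root forces $\|\mathbf{E}^n\|$ to stay bounded uniformly in $n$, so (\ref{3.7}) is unconditionally stable. I expect the only genuinely delicate points to be (i) making the simultaneous diagonalization of $\mathcal{B}_x$, $\mathcal{B}_y$ and $\mathcal{B}_x\mathcal{B}_y$ rigorous through the Kronecker structure and Lemmas \ref{lemma4}--\ref{lemma6}, and (ii) upgrading ``each mode decays'' to a clean uniform bound on $\|\mathbf{E}^n\|$ in terms of $\|\mathbf{E}^0\|$ and $\|\mathbf{E}^1\|$, for which the eigenvalue comparison in Lemma \ref{lemma7} is the natural tool.
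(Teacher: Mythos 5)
Your proof is correct, and its skeleton coincides with the paper's: write the homogeneous error recursion for (\ref{3.7}), recognize it as a three-level scheme, reduce it to a scalar quadratic $\lambda^2-b\lambda+c=0$ for each eigenmode, and invoke Lemma \ref{lemma8}. Where you differ is in how the coefficients $b,c$ are controlled. The paper stays at the matrix level: it forms the block companion matrix $\mathbf{M}$ built from $P=(I-\mathcal{B}_x)^{-1}(I+\mathcal{B}_x)(I-\mathcal{B}_y)^{-1}(I+\mathcal{B}_y)$ and $Q=(I-\mathcal{B}_x)^{-1}\mathcal{B}_x(I-\mathcal{B}_y)^{-1}\mathcal{B}_y$, identifies $b=\lambda_k(P+Q)$, $c=\lambda_k(Q)$, and bounds $b$ by combining $|\lambda_k(P)|<1$, $0<\lambda_k(Q)<1$ with Weyl's inequality (Lemma \ref{lemma7}). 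You instead diagonalize $\mathcal{B}_x$ and $\mathcal{B}_y$ simultaneously (they are symmetric, commute through the Kronecker structure, and have negative spectra by Theorem \ref{theorem1} and Lemma \ref{lemma6}), which decouples the scheme into scalar recurrences with $b=\frac{(1+\mu)(1+\nu)+\mu\nu}{(1-\mu)(1-\nu)}$ and $c=\frac{\mu\nu}{(1-\mu)(1-\nu)}$, and you verify $|b|<1+c<2$ by direct algebra, since $1-c$, $1+c-b$ and $1+c+b$ all have positive numerators when $\mu<0$, $\nu<0$. This buys you two things: Lemma \ref{lemma7} becomes unnecessary, and the simultaneous diagonalizability that the paper uses only implicitly (when it passes from $\mathbf{M}$ to the $2\times 2$ blocks $\mathbf{L}$ pairing $\lambda_k(P+Q)$ with $\lambda_k(Q)$) is made explicit; the paper's version is marginally shorter given its stated lemmas, while yours is more self-contained and makes the eigenvalue pairing transparent. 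Your closing step (spectral radius strictly below one for every mode, hence bounded errors for given starting levels $\mathbf{E}^0$, $\mathbf{E}^1$) is at the same level of rigor as the paper's own conclusion, so no gap beyond what the paper itself accepts.
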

\begin{proof}
For the two-dimensional case of ($1.1^\prime$), Eq.
 (\ref{2.27}) has the following form
\begin{equation}\label{4.008}
\begin{split}
&\mathcal{B}_x =\frac{ d^x\tau}{2\Gamma(4-\alpha)(\Delta x)^{\alpha}}I \otimes (A_{\alpha}+A_{\alpha}^T)=\frac{ \tau}{2}\mathcal{\widetilde{B}}_x,
~~\mbox{where}~~\mathcal{\widetilde{B}}_x=\frac{ d^x}{\Gamma(4-\alpha)(\Delta x)^{\alpha}}I \otimes (A_{\alpha}+A_{\alpha}^T),\\
&\mathcal{B}_y =\frac{ d^y\tau}{2\Gamma(4-\beta)(\Delta y)^{\beta}} (A_{\beta}+A_{\beta}^T)  \otimes I=\frac{ \tau}{2}\mathcal{\widetilde{B}}_y,
~~\mbox{where}~~\mathcal{\widetilde{B}}_y=\frac{ d^y}{\Gamma(4-\beta)(\Delta y)^{\beta}} (A_{\beta}+A_{\beta}^T)  \otimes I,
\end{split}
 \end{equation}
 and  $\mathcal{\widetilde{B}}_x$ and $\mathcal{\widetilde{B}}_y$ commute, i.e.,
\begin{equation*}
\begin{split}
\mathcal{\widetilde{B}}_x\mathcal{\widetilde{B}}_y=\mathcal{\widetilde{B}}_y\mathcal{\widetilde{B}}_x
 = \frac{ d^x}{\Gamma(4-\alpha)(\Delta x)^{\alpha}} \cdot
\frac{ d^y}{\Gamma(4-\beta)(\Delta y)^{\beta}}(A_{\beta}+A_{\beta}^T)  \otimes (A_{\alpha}+A_{\alpha}^T).
\end{split}
 \end{equation*}

Let $\widetilde{u}_{i,j}^n~(i=1,2,\ldots,N_x-1;j=1,2,\ldots,N_y-1;n=0,1,\ldots,N_t)$ be the approximate solution of ${u}_{i,j}^n$,
which is the exact solution of the difference scheme (\ref{3.7}).
Taking $\epsilon_{i,j}^n=\widetilde{u}_{i,j}^n-{u}_{i,j}^n$,
 then from (\ref{3.7}) we obtain the following perturbation equation
 \begin{equation*}
  (I-\mathcal{B}_x )(I-\mathcal{B}_y )\mathbf{E}^{n+1}= (I+\mathcal{B}_x )(I+\mathcal{B}_y )\mathbf{E}^{n}
  +\mathcal{{B}}_x\mathcal{{B}}_y\mathbf{E}^{n}-\mathcal{{B}}_x\mathcal{{B}}_y\mathbf{E}^{n-1},
\end{equation*}
i.e.,
 \begin{equation}\label{4.9}
\mathbf{E}^{n+1}= (P+Q)\mathbf{E}^{n}-Q\mathbf{E}^{n-1},
\end{equation}
where
\begin{equation*}
\begin{split}
&\mathbf{E}^{n}=[\epsilon_{1,1}^n,\epsilon_{2,1}^n,\ldots,\epsilon_{N_x-1,1}^n,
   \epsilon_{1,2}^n,\epsilon_{2,2}^n,\dots,\epsilon_{N_x-1,2}^n,\ldots,\epsilon_{1,N_y-1}^n,\epsilon_{2,N_y-1}^n,\ldots,\epsilon_{N_x-1,N_y-1}^n]^T,
\end{split}
\end{equation*}
and
 \begin{equation*}
 \begin{split}
P=(I-\mathcal{B}_x )^{-1}(I+\mathcal{B}_x )(I-\mathcal{B}_y )^{-1}(I+\mathcal{B}_y ),~~~~
Q=(I-\mathcal{B}_x )^{-1} \mathcal{B}_x (I-\mathcal{B}_y )^{-1} \mathcal{B}_y.
\end{split}
\end{equation*}
Therefore, Eq. (\ref{4.9}) can be rewritten as

\begin{equation*}
  \mathbf{V}^{n+1}=\mathbf{M}\mathbf{V}^{n},
\end{equation*}
with
\begin{equation*}
\mathbf{V}^{n+1}=\left [ \begin{matrix}
   \mathbf{E}^{n+1}        \\
    \mathbf{E}^{n}        \\
 \end{matrix}
 \right ],
 \quad \mbox{and } \quad
\mathbf{M}=\left [ \begin{matrix}
P+Q          &-Q        \\
I            &0     \\
 \end{matrix}
 \right ].
\end{equation*}
From \cite[p.\,128]{Yu:04}, we know that the eigenvalues of $\mathbf{M}$ are the same as the eigenvalues of $\mathbf{L}$, where
\begin{equation*}
\mathbf{L}=\left [ \begin{matrix}
\lambda_k(P+Q)          &-\lambda_k(Q)        \\
1           &0     \\
 \end{matrix}
 \right ],
\end{equation*}
then the eigenvalues $\lambda$ of $\mathbf{M}$ satisfies
\begin{equation*}
  \lambda^2-\lambda_k(P+Q)  \lambda + \lambda_k(Q) =0, \quad k=1,2,\ldots,m \,\,(m=N_x-1 \cdot N_y-1).
\end{equation*}
Similar to the above proof, we know that $\mathcal{\widetilde{B}}_x$ and $\mathcal{\widetilde{B}}_y$ are negative definite and symmetric matrices,
and the matrix $\mathcal{\widetilde{B}}_x\mathcal{\widetilde{B}}_y$ or $\mathcal{\widetilde{B}}_y\mathcal{\widetilde{B}}_x$ is positive definite and symmetric,
it follows that $\lambda_k(P+Q)$ and $\lambda_k(Q)$ are real numbers, and we have
\begin{equation*}
  |\lambda_k(P)|<1 \quad \mbox{and } \quad  0<  \lambda_k(Q)<1,
\end{equation*}
According to Lemma \ref{lemma7} and \ref{lemma8}, we get
\begin{equation*}
  -1-\lambda_k(Q)<\lambda_k(Q)+\lambda_m(P)< \lambda_k(P+Q) <  \lambda_k(Q)+\lambda_1(P)<\lambda_k(Q)+1,
\end{equation*}
thus,  the difference scheme
is unconditionally stable.
\end{proof}

\begin{theorem}\label{theorem8}
Let $u(x_i,y_j,t_n)$ be the exact solution of (\ref{2.15}) corresponding to two-dimensional  case of ($1.1^\prime$) with $\alpha,\beta \in (1,2)$, and
 $u_{i,j}^n$ be the  solution of
the  finite difference scheme (\ref{3.7}),  then there are a positive constant $C$ and some kind of norm $\||\cdot|\|$ such that
\begin{equation*}
  \begin{split}
\||u(x_i,y_j,t_n)-u_{i,j}^n\|| \leq  C (\tau^2+(\Delta x)^2+(\Delta y)^2),
  \end{split}
  \end{equation*}
where
$i=1,2,\ldots,N_x-1;j=1,2,\ldots,N_y-1;\,n=0,1,\ldots,N_t.$
\end{theorem}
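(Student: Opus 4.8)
The plan is to recast the error equation as the inhomogeneous form of the recursion used for stability in Theorem~\ref{theorem7}, and then to close it by a discrete energy estimate in a norm built from the scheme itself rather than by a spectral argument. Put $e_{i,j}^{n}=u(x_i,y_j,t_n)-u_{i,j}^{n}$ and let $\mathbf{e}^{n}$ be the associated grid vector, ordered as $\mathbf{E}^{n}$ in Theorem~\ref{theorem7}. Writing the two--step scheme (\ref{3.7}) in matrix form and subtracting the identity satisfied by the exact solution gives
\begin{equation*}
  (I-\mathcal{B}_x)(I-\mathcal{B}_y)\mathbf{e}^{n+1}=(I+\mathcal{B}_x)(I+\mathcal{B}_y)\mathbf{e}^{n}+\mathcal{B}_x\mathcal{B}_y(\mathbf{e}^{n}-\mathbf{e}^{n-1})+\mathbf{R}^{n+1},
\end{equation*}
where $\mathcal{B}_x,\mathcal{B}_y$ are the two--dimensional matrices of (\ref{4.008}) and, since the splitting error of (\ref{3.7}) is $\mathcal{O}(\tau^{3})$ by (\ref{3.2}) while the Crank--Nicolson consistency error is $\mathcal{O}(\tau^{2}+(\Delta x)^{2}+(\Delta y)^{2})$, the residual satisfies $\|\mathbf{R}^{n+1}\|\le\widetilde{c}\,\tau(\tau^{2}+(\Delta x)^{2}+(\Delta y)^{2})$. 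Moreover $\mathbf{e}^{0}=0$, and because $\mathbf{e}^{1}$ is produced by one step of D--ADI we have $\mathbf{e}^{1}=(I-\mathcal{B}_x)^{-1}(I-\mathcal{B}_y)^{-1}\mathbf{R}^{1}$ with $\|\mathbf{R}^{1}\|$ of the same order.

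Next I would use the symmetry special to $(1.1')$: by (\ref{4.008}) the matrices $\mathcal{B}_x$ and $\mathcal{B}_y$ are symmetric, they commute by Lemma~\ref{lemma4}, and each is negative definite by Theorem~\ref{theorem1}, so $\mathcal{C}:=-(\mathcal{B}_x+\mathcal{B}_y)$ and $\mathcal{D}:=\mathcal{B}_x\mathcal{B}_y$ are symmetric positive definite. With $\delta^{n}:=\mathbf{e}^{n+1}-\mathbf{e}^{n}$ the error equation rearranges to $(I+\mathcal{C})\delta^{n}+\mathcal{D}(\delta^{n}-\delta^{n-1})+2\mathcal{C}\mathbf{e}^{n}=\mathbf{R}^{n+1}$. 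Taking the Euclidean inner product with $\delta^{n}$ and telescoping the two symmetric terms via $2\langle\mathcal{C}\mathbf{e}^{n},\delta^{n}\rangle=\|\mathbf{e}^{n+1}\|_{\mathcal{C}}^{2}-\|\mathbf{e}^{n}\|_{\mathcal{C}}^{2}-\|\delta^{n}\|_{\mathcal{C}}^{2}$ and $\langle\mathcal{D}(\delta^{n}-\delta^{n-1}),\delta^{n}\rangle=\frac{1}{2}\|\delta^{n}\|_{\mathcal{D}}^{2}-\frac{1}{2}\|\delta^{n-1}\|_{\mathcal{D}}^{2}+\frac{1}{2}\|\delta^{n}-\delta^{n-1}\|_{\mathcal{D}}^{2}$, the $\|\delta^{n}\|_{\mathcal{C}}^{2}$ contributions cancel and the positive remainder is absorbed by $\langle\mathbf{R}^{n+1},\delta^{n}\rangle\le\frac{1}{2}\|\mathbf{R}^{n+1}\|^{2}+\frac{1}{2}\|\delta^{n}\|^{2}$, leaving
\begin{equation*}
  \mathcal{E}^{n+1}\le\mathcal{E}^{n}+\tfrac{1}{2}\|\mathbf{R}^{n+1}\|^{2},\qquad\mathcal{E}^{n}:=\|\mathbf{e}^{n}\|_{\mathcal{C}}^{2}+\tfrac{1}{2}\|\mathbf{e}^{n}-\mathbf{e}^{n-1}\|_{\mathcal{D}}^{2}.
\end{equation*}

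Iterating, $\mathcal{E}^{n}\le\mathcal{E}^{1}+\frac{1}{2}\sum_{j=2}^{n}\|\mathbf{R}^{j}\|^{2}$. Because $\mathcal{B}_x,\mathcal{B}_y$ are simultaneously orthogonally diagonalizable with negative eigenvalues and $\frac{-\mu}{(1-\mu)^{2}}\le\frac{1}{4}$ for all $\mu<0$, the representation $\mathbf{e}^{1}=(I-\mathcal{B}_x)^{-1}(I-\mathcal{B}_y)^{-1}\mathbf{R}^{1}$ yields, mode by mode, $\|\mathbf{e}^{1}\|_{\mathcal{C}}^{2}\le\frac{1}{2}\|\mathbf{R}^{1}\|^{2}$ and $\|\mathbf{e}^{1}\|_{\mathcal{D}}^{2}\le\frac{1}{16}\|\mathbf{R}^{1}\|^{2}$, so $\mathcal{E}^{1}\le\|\mathbf{R}^{1}\|^{2}$. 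Since there are at most $N_t=T/\tau$ time levels and $\|\mathbf{R}^{j}\|\le\widetilde{c}\,\tau(\tau^{2}+(\Delta x)^{2}+(\Delta y)^{2})$,
\begin{equation*}
  \mathcal{E}^{n}\le C\,\tau\,\big(\tau^{2}+(\Delta x)^{2}+(\Delta y)^{2}\big)^{2}.
\end{equation*}
Setting $\||v|\|:=\|v\|_{\mathcal{C}}=\langle\mathcal{C}v,v\rangle^{1/2}$ — a genuine, mesh--dependent norm on grid functions because $\mathcal{C}$ is positive definite — and noting $\|\mathbf{e}^{n}\|_{\mathcal{C}}^{2}\le\mathcal{E}^{n}$ together with $\sqrt{\tau}\le1$, this is exactly the asserted estimate; since in addition the smallest eigenvalue of $\mathcal{C}$ is bounded below by a multiple of $\tau$, one even recovers $\|\mathbf{e}^{n}\|\le C(\tau^{2}+(\Delta x)^{2}+(\Delta y)^{2})$ in the ordinary $2$--norm.

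The main obstacle — and the reason the statement has to allow ``some kind of norm'' — is that the obvious route, extending the companion--matrix argument of Theorem~\ref{theorem7}, does not deliver a bound uniform in the mesh: the amplification matrix $\mathbf{M}$ there has spectral radius below $1$ but is non--normal (its $2\times2$ blocks have eigenvalue modulus $\sqrt{\lambda_k(Q)}$, with $\lambda_k(Q)\uparrow1$ on the highest modes as $\tau/(\Delta x)^{\alpha}$ and $\tau/(\Delta y)^{\beta}$ grow), so $\|\mathbf{M}^{n}\|_{2}$ is not controlled. The energy argument above circumvents this by choosing $\||\cdot|\|$ to be the norm generated by the symmetric positive definite operators $\mathcal{C}$ and $\mathcal{D}$, in which the homogeneous recursion is automatically non--increasing; its price is the reliance on $\mathcal{B}_x,\mathcal{B}_y$ being simultaneously symmetric and commuting, which holds for the Riesz equation $(1.1')$ but fails for the advection form (\ref{1.1}), together with the short separate estimate of the D--ADI starting value $\mathbf{e}^{1}$ and some care with the mesh--dependence of the constants.
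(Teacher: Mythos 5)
Your proposal is correct, but it proves the theorem by a genuinely different route than the paper. The paper extends the companion-matrix argument of Theorem \ref{theorem7}: it writes the error recursion as $\mathbf{V}^{n+1}=\mathbf{M}\mathbf{V}^{n}+\mathbf{N}\mathbf{R}^{n+1}$ with $P$, $Q$, $S$ built from $(I\pm\mathcal{B}_x)$, $(I\pm\mathcal{B}_y)$, invokes the spectral information ($|\lambda_k(\mathbf{M})|<1$, $|\lambda_k(\mathbf{N})|<1$) and then simply asserts the existence of ``some kind of norm'' in which $\||\mathbf{M}|\|\leq 1$ and $\||\mathbf{N}|\|\leq 1$, after which the estimate follows by summation. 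You instead derive the same error equation, rewrite it as $(I+\mathcal{C})\delta^{n}+\mathcal{D}(\delta^{n}-\delta^{n-1})+2\mathcal{C}\mathbf{e}^{n}=\mathbf{R}^{n+1}$ with $\mathcal{C}=-(\mathcal{B}_x+\mathcal{B}_y)$, $\mathcal{D}=\mathcal{B}_x\mathcal{B}_y$ symmetric positive definite (which is exactly the symmetry and commutativity special to $(1.1^\prime)$ that the paper also exploits), and close with a telescoping energy inequality $\mathcal{E}^{n+1}\leq\mathcal{E}^{n}+\frac12\|\mathbf{R}^{n+1}\|^2$ plus a clean eigenmode bound for the D-ADI starting error $\mathbf{e}^1$; your norm $\||\cdot|\|=\|\cdot\|_{\mathcal{C}}$ is explicit, and the dissipativity is structural rather than spectral. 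What your approach buys is precisely what the paper leaves vague: the paper's chosen norm is adapted to $\mathbf{M}$ and need not control $\mathbf{N}$ or the residual uniformly in the mesh (a point you correctly flag as the weakness of the non-normal amplification matrix), whereas your energy norm gives an unconditional, constructive estimate. Two caveats, both at the same level of rigor as the paper rather than genuine gaps: your residual bound $\|\mathbf{R}^{n+1}\|\leq\widetilde{c}\,\tau(\tau^2+(\Delta x)^2+(\Delta y)^2)$ in the Euclidean norm silently drops the $\sqrt{(N_x-1)(N_y-1)}$ factor (the paper does the same in Theorems \ref{theorem4}; a scaled discrete $L^2$ or maximum-norm convention is needed), and your closing remark that the plain $2$-norm bound follows because $\lambda_{\min}(\mathcal{C})\geq c\,\tau$ with $c$ mesh-independent uses a uniform spectral gap for $-(A_\alpha+A_\alpha^{\mathrm T})/(\Delta x)^{\alpha}$ that the Gershgorin argument of Theorem \ref{theorem1} does not supply; since the theorem only asks for ``some kind of norm,'' that extra claim is not needed.
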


\begin{proof}
For the two-dimensional case of ($1.1^\prime$),
 taking $e_{i,j}^n=u(x_i,y_j,t_n)-u_{i,j}^n$, 
 from (\ref{2.16}) and (\ref{3.7}),  we obtain
 \begin{equation}\label{4.10}
  (I-\mathcal{B}_x )(I-\mathcal{B}_y )\mathbf{e}^{n+1}= (I+\mathcal{B}_x )(I+\mathcal{B}_y )\mathbf{e}^{n}
  +\mathcal{B}_x\mathcal{B}_y(\mathbf{e}^{n}-\mathbf{e}^{n-1})+R^{n+1},
\end{equation}
where $\mathcal{B}_x$ and  $\mathcal{B}_y$ are given in (\ref{4.008}), and
\begin{equation*}
\begin{split}
\mathbf{e}^{n}&=[e_{1,1}^n,e_{2,1}^n,\ldots,e_{N_x-1,1}^n,
   e_{1,2}^n,e_{2,2}^n,\dots,e_{N_x-1,2}^n,\ldots,e_{1,N_y-1}^n,e_{2,N_y-1}^n,\ldots,e_{N_x-1,N_y-1}^n]^T,\\
R^{n}&=[R_{1,1}^n,R_{2,1}^n,\ldots,R_{N_x-1,1}^n,
   R_{1,2}^n,R_{2,2}^n,\dots,R_{N_x-1,2}^n,\ldots,R_{1,N_y-1}^n,R_{2,N_y-1}^n,\ldots,R_{N_x-1,N_y-1}^n]^T,\\
\end{split}
\end{equation*}
and  $|R_{i,j}^{n+1}|\leq \widetilde{c} \tau(\tau^2+(\Delta x)^2+(\Delta y)^2)$ is given in (\ref{2.17}). Similarly, taking
 \begin{equation*}
 \begin{split}
&P=(I-\mathcal{B}_x )^{-1}(I+\mathcal{B}_x )(I-\mathcal{B}_y )^{-1}(I+\mathcal{B}_y );\\
&Q=(I-\mathcal{B}_x )^{-1} \mathcal{B}_x (I-\mathcal{B}_y )^{-1} \mathcal{B}_y;\\ ~~
&S=(I-\mathcal{B}_x )^{-1} (I-\mathcal{B}_y )^{-1},
\end{split}
\end{equation*}
then, Eq. (\ref{4.10}) can be rewritten as
\begin{equation}\label{4.11}
  \mathbf{V}^{n+1}=\mathbf{M}\mathbf{V}^{n}+\mathbf{N}\mathbf{R}^{n+1},
\end{equation}
with
\begin{equation*}
\mathbf{V}^{n+1}=\left [ \begin{matrix}
   \mathbf{e}^{n+1}        \\
    \mathbf{e}^{n}        \\
 \end{matrix}
 \right ],
 ~~
 \mathbf{R}^{n+1}=\left [ \begin{matrix}
   {R}^{n+1}        \\
    {R}^{n}        \\
 \end{matrix}
 \right ],
~~
\mathbf{M}=\left [ \begin{matrix}
P+Q          &-Q        \\
I            &0     \\
 \end{matrix}
 \right ],
  \quad \mbox{and } \quad
\mathbf{N}=\left [ \begin{matrix}
S          &0        \\
0          &0     \\
 \end{matrix}
 \right ].
 \end{equation*}
Similarly, we can prove $|\lambda_k(\mathbf{N})|<1$, then there exists some kind of norm $\||\cdot|\|$ such that $\||\mathbf{M}|\| \leq 1$, and $\||\mathbf{N}\||\leq 1$.
Taking the norm on both sides of (\ref{4.11}) 
leads to
   \begin{equation*}
  \|| \mathbf{e}^{n}\|| \leq \|| \mathbf{V}^{n}\||\leq \sum_{k=0}^{n-1}\||\mathbf{R}^{k+1}\||\leq
  c (\tau^2+(\Delta x)^2+(\Delta y)^2).
\end{equation*}
\end{proof}

All the theoretical results for the three-dimensional case $(1.1^\prime)$ can be obtained by the same way of the two-dimensional case of $(1.1^\prime)$. For the briefness of the paper, we omit them here.

\section{Numerical Results}
Here we verify the above theoretical results including convergent
order and stability. Introducing
the vectors $U(\Delta x)=[u_h(x_0,t),\ldots,u_h(x_n,t)]^{\rm T}$,
where $U$ is the approximated value,  and $u(\Delta
x)=[u(x_0,t),\ldots,u(x_n,t)]^{\rm T}$, where $u$ is the exact value
and the stepsize in space is $\Delta x$, i.e., $\Delta
x=x_{i+1}-x_i$, in the following numerical examples the errors are
measured by
\begin{equation}\label{5.1}
  ||U(\Delta x)-u(\Delta x)||_\infty,
\end{equation}
where $||\cdot||_\infty$ is the maximum norm.
\subsection{Numerical results for 1D}

Let us consider the one-dimensional two-sided fractional convection diffusion
equation (\ref{2.7}), where $0< x < 1 $ and $0 < t \leq 1$, with the coefficients $d_1^x=d_2^x=\kappa_x=1$, and the forcing function
\begin{equation*}
\begin{split}
 f(x,t)&=-e^{-t} \Big[  x^2 \left(1-x \right)^2+ \left(4x^3-6x^2+2x \right) +\frac{\Gamma(3)}{\Gamma(3-\alpha)}\left(x^{2-\alpha}+(1-x)^{2-\alpha}\right) \\
       &\qquad -2\frac{\Gamma(4)}{\Gamma(4-\alpha)}\left(x^{3-\alpha}+(1-x)^{3-\alpha}\right)
        +\frac{\Gamma(5)}{\Gamma(5-\alpha)}\left(x^{4-\alpha}+(1-x)^{4-\alpha} \right)   \Big],
  \end{split}
\end{equation*}
the initial condition $u(x,0)=x^2(1-x)^2$, the boundary conditions $u(0,t)=u(1,t)=0$,
and the exact solution of the equation is $u(x,t)=e^{-t}x^2(1-x)^2$.

\begin{table}[h]\fontsize{9.5pt}{12pt}\selectfont
  \begin{center}
  \caption {The maximum errors (\ref{5.1}) and convergent orders for the scheme (\ref{2.11}) of the one-dimensional two-side fractional convection diffusion equation (\ref{2.7}) at t=1 and $\Delta t=\Delta x$.} \vspace{5pt}
\begin{tabular*}{\linewidth}{@{\extracolsep{\fill}}*{7}{c}}                                    \hline  
         $\Delta t,\,\Delta x$ & $\alpha=1.1$ & Rate   & $\alpha=1.5$ & Rate   & $\alpha=1.9$ &   Rate      \\\hline
              ~~1/10&   0.0022&          &  0.0011&               &   0.0010&             \\\hline 
             ~~1/20&   4.5729e-004&  2.2916&  2.6284e-004&   2.0596&  2.5502e-004&   1.9917    \\\hline 
             ~~1/40&   1.0712e-004&  2.0939&  6.2954e-005&  2.0618&  6.4257e-005&   1.9887   \\\hline 
             ~~1/80&   2.5242e-005&  2.0853&  1.5067e-005&  2.0628&  1.6169e-005&   1.9906    \\\hline 
             ~~1/160&   5.9414e-006&  2.0869&  3.6083e-006&  2.0620&  4.0594e-006&  1.9939   \\\hline 
    \end{tabular*}\label{tab:a}
  \end{center}
\end{table}
In Table \ref{tab:a}, we show that the scheme  (\ref{2.11}) is second order convergent in both space and time.
\subsection{Numerical results for 2D}
 Consider the two-dimensional two-sided space fractional convection diffusion equation (\ref{2.15}),
on a finite domain $ 0<x<2,\,0<y<2$, $0< t \leq 2$, and with the
coefficients
 \begin{equation*}
\begin{split}
&d_1^x=\Gamma(3-\alpha)x^{\alpha},\quad d_2^x =\Gamma(3-\alpha)(2-x)^{\alpha},\quad \kappa_x=\frac{1}{4}x,\\
&d_1^y=\Gamma(3-\beta)y^{\beta},\quad d_2^y =\Gamma(3-\beta)(2-y)^{\beta}, \,\quad \kappa_y=\frac{1}{4}y,\\
\end{split}
\end{equation*}
and the forcing function
\begin{equation*}
\begin{split}
 f(x,y,t)=&-4e^{-t}x^2y^2(x-2)(y-2)(3xy-5x-5y+8)\\
  &-32e^{-t}y^2(2-y)^2\left[ x^2+(2-x)^2- \frac{3\left( x^3+(2-x)^3  \right)}{3-\alpha}
  +\frac{3\left(x^4+(2-x)^4  \right)}{(3-\alpha)(4-\alpha)}   \right] \\
  &-32e^{-t}x^2(2-x)^2\left[ y^2+(2-y)^2- \frac{3\left( y^3+(2-y)^3  \right)}{3-\beta}
  +\frac{3\left(y^4+(2-y)^4  \right)}{(3-\beta)(4-\beta)}   \right],
  \end{split}
\end{equation*}
and the initial condition $u(x,y,0)=4x^2(2-x)^2 y^2(2-y)^2$ and the
Dirichlet boundary conditions on the rectangle in the form $
u(0,y,t)=u(x,0,t)=0$ and $ u(2,y,t)=u(x,2,t)=0$
 for all $ t>0$. The exact solution to this two-dimensional two-sided fractional convection diffusion equation is
 \begin{equation*}
\begin{split}
u(x,y,t)=4e^{-t}x^2(2-x)^2 y^2(2-y)^2.
  \end{split}
  \end{equation*}

\begin{table}  [h]  \fontsize{9.5pt}{12pt}  \selectfont
  \begin{center}
  \caption{
  The maximum errors (\ref{5.1}) and convergent orders for the scheme (\ref{2.25})-(\ref{2.26}) of the two-dimensional two-sided fractional convection diffusion equation (\ref{2.15}) at $t=2$ with $\tau
=\Delta x =\Delta y$.}\vspace{5pt}

    \begin{tabular*}{\linewidth}{@{\extracolsep{\fill}}*{7}{l|c|c|c|c|c|r}}                                   \hline
         $\tau,\,\Delta x,\,\Delta y$ & $\alpha=1.1,\beta=1.2$ & Rate   & $\alpha=1.5,\beta=1.4$ & Rate   & $\alpha=1.9,\beta=1.9$ &   Rate      \\\hline
              ~~1/10&                  0.0133&        &                 0.0116 &        &                  0.0109&             \\\hline
              ~ 1/20&                  0.0033&  1.9966&                  0.0029&  1.9830&                  0.0028&  1.9750\\\hline
              ~~1/40&             8.3408e-004&  1.9985&             7.4001e-004&  1.9876&             7.0378e-004&  1.9739   \\\hline
              ~~1/80&             2.0877e-004&  1.9982&             1.8612e-004&  1.9913&             1.7900e-004&  1.9752   \\\hline
             ~~1/160&             5.2231e-005&  1.9990&             4.6726e-005&  1.9940&             4.5468e-005&  1.9770  \\\hline
    \end{tabular*}{\label{004}}
  \end{center}
\end{table}
Comparing Table \ref{004} with Table 2 of \cite{Chen:12}, we further
confirm that the PR-ADI and D-ADI are equivalent for solving
two-dimensional equations, since they have the completely same
maximum error values. Table \ref{004} numerically shows that the
D-ADI scheme (\ref{2.25})-(\ref{2.26}) is second order convergent
and this is in agreement with the order of the truncation error.

\subsection{Numerical results for 3D}
 Consider the three-dimensional two-sided fractional convection diffusion equation (\ref{1.1}),
on a finite domain $0<x<2$, $0<y<2$, $0<z<2$, $0< t \leq 2$, and with the coefficients
 \begin{equation*}
\begin{split}
&d_1^x=\Gamma(3-\alpha)x^{\alpha},\quad d_2^x=\Gamma(3-\alpha)(2-x)^{\alpha},\quad \kappa_x=\frac{1}{4}x,\\
&d_1^y=\Gamma(3-\beta)y^{\beta}, \quad d_2^y=\Gamma(3-\beta)(2-y)^{\beta}, \,\quad \kappa_y=\frac{1}{4}y,\\
&d_1^z=\Gamma(3-\gamma)z^{\gamma}, \quad d_2^z=\Gamma(3-\gamma)(2-z)^{\gamma}, \,\quad \kappa_z=\frac{1}{4}z,
\end{split}
\end{equation*}
 and the  zero Dirichlet boundary conditions on the cube  for all $t>0$,  the exact solution to this three-dimensional two-sided fractional convection diffusion equation is
 \begin{equation*}
\begin{split}
u(x,y,z,t)=4e^{-t}x^2(2-x)^2 y^2(2-y)^2z^2(2-z)^2.
  \end{split}
  \end{equation*}
According to  the above conditions, it is easy to get the forcing function $f(x,y,z,t)$.

  \begin{table}[h]\fontsize{9.5pt}{12pt}\selectfont
\begin{center}
 \caption{The maximum errors (\ref{5.1}) and convergent orders for the scheme (\ref{2.34})-(\ref{2.36}) of the
  three-dimensional two-sided fractional convection diffusion equation (\ref{1.1}) at $t=2$
  with $\tau=\Delta x =\Delta y=\Delta z$.}\vspace{5pt}

\begin{tabular*}{\linewidth}{@{\extracolsep{\fill}}*{7}{l|c|c|c|c|c|r}}                                   \hline
$~~\tau$ & $\alpha=\beta=\gamma=1.2$ & Rate & $\alpha=1.4,\beta=1.5,\gamma=1.6$ & Rate  & $\alpha=\beta=\gamma=1.9$ &Rate  \\\hline

       ~~1/10&             1.2063e-002&        &             1.3349e-002&        &             1.5558e-002&             \\\hline
       ~ 1/20&             3.0047e-003&  2.0053&             3.3242e-003&  2.0057&             3.7859e-003&  2.0390\\\hline
       ~~1/40&             7.5079e-004&  2.0008&             8.3225e-004&  1.9979&             9.4168e-004&  2.0073   \\\hline
       ~~1/80&             1.8773e-004&  1.9997&             2.0875e-004&  1.9952&             2.3625e-004&  1.9949  \\\hline
\end{tabular*}{\label{0006}}

\end{center}
\end{table}
Table \ref{0006} also shows the  maximum error, at  time $t=2$ and $\tau=\Delta x =\Delta y=\Delta z$,
between the exact analytical value and the
numerical value obtained by applying the D-ADI scheme
(\ref{2.34})-(\ref{2.36}), and the scheme is second order convergent
and this is in agreement with the order of the truncation error.

\section{Conclusions}
This work provides an algorithm which can efficiently solve
three-dimensional space fractional PDEs. The idea is to solve higher
dimensional problem by the strategy of dimension by dimension. When
realizing the idea, the splitting errors may be introduced, so the
techniques of diminishing the influences of splitting errors are
also discussed. The effectiveness of the algorithm is theoretically
proved and numerically verified.

%

%

\section*{Acknowledgments}This work was supported by the Program for
New Century Excellent Talents in University under Grant No.
NCET-09-0438, the National Natural Science Foundation of China under
Grant No. 10801067 and No. 11271173, and the Fundamental Research Funds for the
Central Universities under Grant No. lzujbky-2010-63 and No. lzujbky-2012-k26.


\end{document}